\newtheorem{theorem}{Theorem}
\newtheorem{proposition}[theorem]{Proposition}
\newtheorem{lemma}[theorem]{Lemma}
\newtheorem{corollary}[theorem]{Corollary}
\theoremstyle{definition}
\newtheorem{remark}[theorem]{Remark}
\theoremstyle{remark}
\newtheorem*{claim}{Claim}
\DeclareMathOperator{\Pic}{Pic}
\DeclareMathOperator{\Sym}{Sym}
\DeclareMathOperator{\Hom}{Hom}
\DeclareMathOperator{\Cl}{Cl}
\let\Im\relax
\DeclareMathOperator{\Im}{Im}
\newcommand*{\van}{\ensuremath{\mathrm{van}}}
\newcommand*{\ZZ}{\ensuremath{\mathbb{Z}}}
\newcommand*{\RR}{\ensuremath{\mathbb{R}}}
\newcommand*{\CC}{\ensuremath{\mathbb{C}}}
\newcommand*{\PP}{\ensuremath{\mathbb{P}}}
\newcommand*{\id}{\ensuremath{\mathrm{id}}}
\newcommand*{\Xcal}{\ensuremath{\mathcal{X}}}
\newcommand*{\Ecal}{\ensuremath{\mathcal{E}}}
\newcommand*{\Ocal}{\ensuremath{\mathcal{O}}}
\newcommand*{\Hcal}{\ensuremath{\mathcal{H}}}
\let\ol\overline
\let\eps\varepsilon
\begin{document}

\title{On the rationality of quadric surface bundles}
\author{Matthias Paulsen}
\address{Department Mathematisches Institut \\ Ludwig-Maximilians-Universität München \\ Theresienstrasse~39 \\ D-80333~München \\ Germany }
\email{paulsen@math.lmu.de}
\keywords{Hodge loci, rationality problem, quadric surface bundles}
\subjclass[2010]{Primary 14E08, 14D07; Secondary 13H10, 14J35, 14M25}
\date{January~14, 2020}

\begin{abstract}
For any standard quadric surface bundle over $\PP^2$,
we show that the locus of rational fibres is dense in the moduli space.
\end{abstract}

\maketitle

\section{Introduction}

In \cite{hpt-quadrics}, Hassett, Pirutka, and Tschinkel gave the first example of a family $\Xcal\to B$ of smooth complex projective varieties such that
for a very general $b\in B$, the fibre $\Xcal_b$ is not stably rational, while the locus of $b\in B$ where $\Xcal_b$ is rational is dense in $B$ for the Euclidean topology.
Specifically, they considered the family of smooth complex hypersurfaces in $\PP^2\times\PP^3$ defined by a homogeneous polynomial of bidegree $(2,2)$.
Their result is remarkable as it shows that rationality of the fibres is in general not a closed property on the base. In particular, rationality is not deformation invariant in smooth families.

In order to prove stable irrationality of a very general member, they used the specialization method of Voisin \cite{voisin-unirational} and Colliot-Thélène--Pirutka \cite{ct-pirutka},
which allowed to disprove stable rationality in several other families as well, see e.\,g.\ \cite{voisin-survey} for an overview.

Subsequently, other smooth families containing both rational and stably irrational fibres were identified, for example in \cite{hpt-double}, \cite{hpt-intersection}, \cite{schreieder-long}, \cite{schreieder}, \cite{abp}, and \cite{hkt}.
Typically, it is easy to provide certain rational members in the studied families.
However, this does not exclude that the locus of rational fibres is contained in a proper closed subset of the base.
In only a few cases, it was shown that the locus of rational fibres is dense in the moduli space.

The fourfolds considered in \cite{hpt-quadrics} and \cite{hpt-double} are (birational to) quadric surface bundles over $\PP^2$
of types $(2,2,2,2)$ and $(0,2,2,4)$, respectively.
Here, a quadric surface bundle of type $(d_0,d_1,d_2,d_3)$ for integers $d_0,d_1,d_2,d_3\ge0$ of the same parity is given by an equation of the form
\begin{equation}
\sum_{0\le i,j\le3}a_{ij}y_iy_j = 0 \label{eq:quadratic-form}
\end{equation}
where $a_{ij}=a_{ji}$ is a homogeneous polynomial of degree $\frac12(d_i+d_j)$ in the three coordinates of $\PP^2$
and $y_0,y_1,y_2,y_3$ denote local trivializations of a split vector bundle $\Ecal$ on $\PP^2$ of rank~4,
see Section~\ref{sect:toric} for a more precise definition. The quadric surface bundle $X\subset\PP(\Ecal)$ over $\PP^2$
defined by equation~\eqref{eq:quadratic-form} is also called a \emph{standard quadric surface bundle}.
Apart from the examples in \cite{hpt-quadrics} and \cite{hpt-double},
many other fourfolds are birational to standard quadric surface bundles. For instance, 
a hypersurface in $\PP^5$ of degree~$d+2$ with multiplicity~$d$ along a plane for some integer $d\ge1$
is birational to a quadric surface bundle of type $(d,d,d,d+2)$, see e.\,g.\ \cite[Lemma~23]{schreieder-long}.

The smooth quadric surface bundles of fixed type $(d_0,d_1,d_2,d_3)$ are parametrized by a non-empty Zariski open subset $B\subset\PP(V)$ in
the projectivization of the complex vector space
\begin{equation}
V = \bigoplus_{0\le i\le j\le3}H^0\left(\PP^2,\Ocal_{\PP^2}\left(\tfrac12(d_i+d_j)\right)\right) \;. \label{eq:global}
\end{equation}
We may then consider the universal family $\Xcal\to B$ of smooth quadric surface bundles over $\PP^2$ of type $(d_0,d_1,d_2,d_3)$.

Using his improvement \cite{schreieder-long} of the specialization method, Schreieder
proved in \cite{schreieder} that a very general quadric surface bundle of type $(d_0,d_1,d_2,d_3)$ is not stably rational except for the two cases
$(1,1,1,3)$ and $(0,2,2,2)$ (up to reordering) which remain open and for trivial cases where the quadric surface bundle always has a rational section and is hence rational.
This vastly generalizes the irrationality results of \cite{hpt-quadrics} and \cite{hpt-double} to a natural class of families of quadric surface bundles over $\PP^2$.

The aim of this article is to prove the corresponding density assertion for any standard quadric surface bundle over $\PP^2$,
thus showing that in this large class of families the locus of rational fibres is never contained in a proper closed subset of the moduli space.
Concretely, we will prove the following:
\begin{theorem}\label{thm:main}
Let $d_0,d_1,d_2,d_3\ge0$ be integers of the same parity and let $\Xcal\to B\subset\PP(V)$ be the family of smooth quadric surface bundles over $\PP^2$ of type $(d_0,d_1,d_2,d_3)$ as above.
Then the set \[ \{b\in B \mid\Xcal_b\text{ is rational}\} \] is dense in $B$ for the Euclidean topology.
\end{theorem}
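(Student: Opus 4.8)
My plan is to exhibit, densely in $B$, quadric surface bundles admitting a rational section, since any such bundle is rational: a smooth quadric surface with a rational point is rational over the function field of $\PP^2$, so a rational section makes the generic fibre $\Xcal_b$ rational over $\CC(\PP^2)$, hence $\Xcal_b$ is rational over $\CC$. Concretely, a section is cut out by demanding that for each point of $\PP^2$ the corresponding quadratic form \eqref{eq:quadratic-form} in $y_0,\dots,y_3$ vanish at a chosen point of $\PP(\Ecal)$; fixing a rational map $s\colon\PP^2\dashrightarrow\PP(\Ecal)$ lying over $\PP^2$ and imposing $s^*(\text{equation})\equiv 0$ is a \emph{linear} condition on the coefficient vector in $V$. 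The subtlety is that this linear subspace must meet the open locus $B$ of \emph{smooth} bundles, and — crucially for density — the union of these subspaces over all choices of $s$ must be Zariski-dense (equivalently Euclidean-dense) in $\PP(V)$.

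The key steps, in order. First, I would make the toric/vector-bundle setup of Section~\ref{sect:toric} precise enough to describe sections $s$ of $\PP(\Ecal)\to\PP^2$: because $\Ecal=\bigoplus_{i}\Ocal_{\PP^2}(-\tfrac12 d_i)$ (up to a twist), a rational section is given in the homogeneous coordinate ring by a tuple $(\sigma_0,\sigma_1,\sigma_2,\sigma_3)$ where $\sigma_i$ is homogeneous on $\PP^2$ of the appropriate degree, not all zero. Substituting $y_i=\sigma_i$ into \eqref{eq:quadratic-form} gives $\sum_{i,j}a_{ij}\sigma_i\sigma_j$, a homogeneous polynomial on $\PP^2$ of a fixed degree $N$; requiring it to be identically zero is a system of linear equations on $(a_{ij})\in V$, defining a linear subspace $L_s\subset\PP(V)$. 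Second, I would estimate $\operatorname{codim} L_s$: it is at most $h^0(\PP^2,\Ocal(N))=\binom{N+2}{2}$, while $\dim \PP(V)=\sum_{i\le j} h^0(\Ocal(\tfrac12(d_i+d_j)))-1$; a direct count (the Paulsen-type bookkeeping) should show this codimension is strictly smaller than $\dim\PP(V)$, so $L_s$ is a positive-dimensional family, and moreover that the $L_s$ genuinely move with $s$. Third, and this is the heart of the density argument, I would show $\bigcup_s L_s$ is dense in $\PP(V)$: parametrize the pairs $(s, X)$ with $s\subset X$ by an incidence variety $I\subset \PP(\text{sections})\times\PP(V)$, show $I$ is irreducible (it fibres over the space of sections $s$ with fibre $L_s$, each irreducible of the same dimension, and the base is irreducible), and prove the projection $I\to\PP(V)$ is dominant by a dimension count $\dim I \ge \dim\PP(V)$ — equivalently, that a general $X\in\PP(V)$ does admit such a section. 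Fourth, I would intersect with the smooth locus: $B$ is a non-empty Zariski-open subset of $\PP(V)$, the image of $I\to\PP(V)$ contains a non-empty Zariski-open subset (dominant morphism of varieties), hence meets $B$ in a non-empty Zariski-open — thus Zariski-dense, thus Euclidean-dense — subset, and every point of it corresponds to a smooth $\Xcal_b$ with a rational section, hence rational.

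The main obstacle I anticipate is the dimension count in the third step: showing that a \emph{general} quadric surface bundle of the given type contains a rational section in the requisite multidegree, i.e.\ that the incidence projection is dominant rather than landing in a proper closed subset. Equivalently one must produce, for coefficients $(a_{ij})$ in a dense subset, a nonzero solution $(\sigma_i)$ of the single equation $\sum a_{ij}\sigma_i\sigma_j=0$ on $\PP^2$; one clean way is to choose the $\sigma_i$ first so that this quadratic expression is forced to vanish for structural reasons (e.g.\ exploit that two of the $\sigma_i$ can be taken to isotropic-pair the form, reducing to solving $a\sigma_0\sigma_1 + \dots = 0$, which for suitably chosen monomials $\sigma_i$ imposes only \emph{linearly independent} conditions spanning a subspace that sweeps out everything as the monomials vary). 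I also need to check that for at least one such $X$ the total space is smooth — i.e.\ $B$ is really met — which is where I would either invoke the non-emptiness of $B$ from the setup together with genericity, or exhibit one explicit smooth example with a section and then spread out. A secondary but routine point is confirming that a smooth quadric \emph{surface} over the non-algebraically-closed field $\CC(\PP^2)$ with a rational point is rational over that field (it is: project from the point), so that rationality of the generic fibre over $\CC(\PP^2)$ upgrades to rationality of $\Xcal_b$ over $\CC$.
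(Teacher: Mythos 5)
There is a fatal gap in the third step. You claim that the incidence variety $I$ of pairs (section, bundle) dominates $\PP(V)$, so that the image contains a non-empty Zariski open subset of $B$; but a bundle with a rational section is rational, so this would say that the \emph{general} quadric surface bundle of type $(d_0,d_1,d_2,d_3)$ is rational. By Schreieder's theorem (cited in the introduction), the \emph{very general} member is not even stably rational for every non-trivial type outside two open cases, so the projection $I\to\PP(V)$ cannot be dominant, and more generally the countable union $\bigcup_s L_s$ over all multidegrees of $s$ cannot contain a non-empty Zariski open set. This is also visible in your own dimension count: the family of candidate sections of twist $e$ has dimension growing like $2e^2$, while the codimension of $L_s$ (the number of linear conditions $\sum a_{ij}\sigma_i\sigma_j\equiv 0$) also grows like $2e^2$, and the lower-order terms go the wrong way for general types. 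The phenomenon the theorem describes is genuinely one of Euclidean density of a countable union of proper subvarieties, not Zariski density of a single component, so no argument of the form ``one irreducible incidence variety dominates'' can work.

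The paper's route is structurally different in three ways, each of which is needed. First, one must relax ``rational section'' to ``odd-degree multisection'': by Springer's theorem a quadric over $k=\CC(\PP^2)$ with a point over an odd-degree extension $K/k$ already has a $k$-point, so it suffices to find a surface $Z\subset X$ with $[Z]\cup\pi^*[p]$ odd. Second, the existence of such a $Z$ is converted into a purely Hodge-theoretic condition (an integral class $\alpha\in H^{2,2}_\van$ with $\alpha\cup\pi_b^*[p]$ odd) using the integral Hodge conjecture for quadric bundles over surfaces due to Colliot-Th\'el\`ene and Voisin. Third, the Euclidean density of the resulting Noether--Lefschetz-type locus is obtained from the Green--Voisin infinitesimal criterion: one exhibits a single $b$ and $\ol\lambda\in H^{2,2}_\van(\Xcal_b)$ with $\ol\nabla_b(\ol\lambda)$ surjective, which after the toric Griffiths-type description of the period map reduces to a surjectivity statement for multiplication in a bigraded Jacobian ring (Proposition~\ref{prop:main}); that is where all the real work of the paper lies. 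Your proposal contains none of these three ingredients, and the step it relies on instead is false.
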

The first case where such a density result was proven was for type $(0,2,2,4)$ and is due to Voisin \cite[Section~2]{voisin-rationality}, see also \cite[Proposition~25]{schreieder-long}.
The case of type $(2,2,2,2)$ was shown in \cite{hpt-quadrics}.
In particular, Theorem~\ref{thm:main} generalizes their density result to hypersurfaces in $\PP^2\times\PP^3$ of bidegree $(d,2)$ for arbitrary $d\ge0$.
Our result also gives an affirmative answer to the question raised in \cite[Remark~49]{schreieder-long}.

In order to prove Theorem~\ref{thm:main}, we follow Voisin's approach that has later been used in \cite[Section~6]{hpt-quadrics} and \cite[Section~2.3]{hpt-intersection}.
Using a theorem of Springer \cite{springer} and the fact that the integral Hodge conjecture is known in codimension two for quadric bundles over surfaces \cite[Corollaire~8.2]{ct-voisin},
we obtain a Hodge theoretic criterion guaranteeing the rationality of smooth quadric surface bundles over $\PP^2$.
This leads to the study of a Noether--Lefschetz locus in the variation of Hodge structure associated to the family $\Xcal\to B$ in question.
In \cite[Proposition~5.20]{voisin2}, Voisin stated an infinitesimal condition for the density of such loci, based on Green's proof in \cite[Section~5]{chm} of an analogous density result in the context of the Noether--Lefschetz theorem.
In our case, the criterion asks for a class $\ol\lambda\in H^{2,2}_\van(\Xcal_b)$ at some base point $b\in B$ such that the infinitesimal period map evaluated at $\ol\lambda$
\[ \ol\nabla_b(\ol\lambda)\colon T_{B,b} \to H^{1,3}_\van(\Xcal_b) \]
is surjective.

Since a standard quadric surface bundle over $\PP^2$ is a toric variety,
we can apply \cite[Theorem~10.13]{batyrev-cox} to describe $\ol\nabla_b(\ol\lambda)$ as a multiplication map in a homogeneous quotient of a bigraded polynomial ring.
Therefore, the desired density result reduces to an elementary statement about polynomials.
This problem was solved in \cite{hpt-quadrics} and \cite{hpt-intersection} with explicit computations.
Of course, a different technique is required to handle a whole class of families rather than a specific one.
The main contribution of this paper consists thus in solving this problem to which Theorem~\ref{thm:main} reduces to via general arguments.
An important ingredient of our proof is a result about the strong Lefschetz property of certain complete intersections which was proven in \cite[Proposition~30]{harima-watanabe}.

Green's and Voisin's infinitesimal density criterion has been employed in many different situations since its first use in \cite[Section~5]{chm}.
For instance, Voisin used it in \cite{voisin-integral} when proving the integral Hodge conjecture for $(2,2)$-classes on uniruled or Calabi--Yau threefolds.
More recently, a real analogue of the criterion was applied in \cite{benoist} to prove that sums of three squares are dense among bivariate positive semidefinite real polynomials.

There exist different strategies for verifying the surjectivity of the infinitesimal period map.
While \cite{benoist} follows the approach of \cite{ciliberto-lopez} by constructing components of the Noether--Lefschetz locus of maximal codimension,
Kim gave in \cite[Theorem~2]{kim} a new proof of the density theorem from \cite[Section~5]{chm} by proving a statement about the Jacobian rings appearing in the description of $\ol\nabla_b(\ol\lambda)$.
The most general arguments are due to Voisin, for example in \cite{voisin-griffiths} and \cite{voisin-integral}.

We use the method of computing the infinitesimal period map explicitly, as done in \cite{kim}.
However, we solve the underlying algebraic problem in a different manner than in \cite[Section~3]{kim}.
Our approach involving the strong Lefschetz property, the use of which seems to be new in this area,
further allows to give a short proof for the density of the original Noether--Lefschetz locus for surfaces in $\PP^3$, thus simplifying the arguments of \cite{kim} considerably.

The article is structured as follows.
In Section~\ref{sect:criterion}, we relate the rationality of smooth quadric surface bundles over $\PP^2$ to the cohomology group $H^{2,2}$
and explain how Green's and Voisin's infinitesimal density criterion applies in our situation.
In Section~\ref{sect:toric}, we interpret standard quadric surface bundles as toric hypersurfaces in order to give an explicit representation of $\ol\nabla_b(\ol\lambda)$.
This cumulates in Proposition~\ref{prop:main}, where we formulate a non-trivial statement concerning a bigraded polynomial ring
which is sufficient for showing Theorem~\ref{thm:main}.
In Section~\ref{sect:preparations}, we provide some tools for studying the surjectivity of polynomial multiplication maps and demonstrate their power
by giving a simple proof for the density of the classical Noether--Lefschetz locus.
Finally, in Section~\ref{sect:proof} we use the previous preparations in order to prove Proposition~\ref{prop:main}, from which our main result follows.

Unless otherwise stated, we always work over the field of complex numbers.
A variety is defined to be an integral separated scheme of finite type over a field.
A quadric surface bundle over $\PP^2$ is a complex projective variety $X$ together with a flat morphism $\pi\colon X\to\PP^2$
such that the generic fibre $X_\eta$ is a smooth quadric surface over the function field $\CC(\PP^2)$.
If $X$ is a smooth complex projective variety and $Z\subset X$ is a subvariety of codimension $k$, we denote by $[Z]\in H^{k,k}(X,\ZZ)$
the Poincaré dual of the homology class of $Z$.

\section*{Acknowledgements}

I am very grateful to Stefan Schreieder for giving me the opportunity to work on this interesting question
and for many helpful comments concerning this article.
Further, I would like to thank an anonymous referee for his very careful reading and valuable suggestions improving the paper.

\section{A Density Criterion}\label{sect:criterion}

Let us consider a smooth quadric surface bundle $\pi\colon X\to\PP^2$.
Since $\PP^2$ is rational, $X$ is rational (over $\CC$) as soon as the generic fibre $X_\eta$ is rational over the function field $k=\CC(\PP^2)$.
It is well known that this follows from the existence of a $k$-point on the smooth quadric surface $X_\eta$.
Now we can use the following theorem of Springer \cite{springer}:
\begin{proposition}[Springer]\label{prop:springer}
Let $Q$ be a quadric hypersurface over a field $k$ and let $K/k$ be a finite field extension of odd degree.
If $Q$ has a $K$-point, then $Q$ has a $k$-point.
\end{proposition}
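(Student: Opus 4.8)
The statement is Springer's classical theorem on quadratic forms, and the plan is to reduce to and then prove it in that form. Writing the quadric hypersurface as $Q = \{q = 0\} \subset \PP^{m-1}$ for a quadratic form $q$ in $m$ variables over $k$, a $k$-point (resp.\ $K$-point) of $Q$ is exactly a nonzero isotropic vector for $q$ over $k$ (resp.\ over $K$). So it suffices to show that a quadratic form over $k$ that is anisotropic over $k$ stays anisotropic over every finite field extension of odd degree. I would prove this by strong induction on $n = [K:k]$.

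The case $n = 1$ is trivial. For $n > 1$, pick $\alpha \in K \setminus k$. If $k(\alpha) \subsetneq K$, then $[K : k(\alpha)]$ and $[k(\alpha):k]$ are odd and both $< n$, so applying the inductive hypothesis first to the extension $K/k(\alpha)$ and then to $k(\alpha)/k$ shows that a $K$-point of $Q$ produces a $k$-point. Hence we may assume $K = k(\alpha)$; let $p \in k[t]$ be the minimal polynomial of $\alpha$, which has degree $n$, and suppose toward a contradiction that $Q$ has no $k$-point.

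A $K$-point of $Q$ corresponds to a nonzero tuple $(f_1(\alpha), \dots, f_m(\alpha))$ with $f_i \in k[t]$ of degree $< n$ and $q(f_1(\alpha), \dots, f_m(\alpha)) = 0$. Dividing the $f_i$ by their gcd --- a polynomial of degree $< n$, hence not vanishing at $\alpha$ --- we may assume $\gcd(f_1, \dots, f_m) = 1$. The polynomial $F(t) := q(f_1(t), \dots, f_m(t)) \in k[t]$ then vanishes at $\alpha$, so $p \mid F$, say $F = pg$ with $g \in k[t]$. Let $d = \max_i \deg f_i$ and let $c_i \in k$ be the coefficient of $t^d$ in $f_i$, so that $(c_1, \dots, c_m) \neq 0$; then the coefficient of $t^{2d}$ in $F$ equals $q(c_1, \dots, c_m)$, which is nonzero because $q$ is anisotropic over $k$. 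Hence $\deg F = 2d$, so $g \neq 0$ and $\deg g = 2d - n$; since $2d$ is even and $n$ is odd, $\deg g$ is a positive odd integer, and since $d \le n - 1$ it satisfies $\deg g \le n - 2 < n$. Therefore $g$ has an irreducible factor $h \in k[t]$ of odd degree $\le \deg g < n$. Choosing a root $\beta$ of $h$, we obtain a field $k(\beta)$ with $[k(\beta):k] = \deg h$ odd and $< n$, over which $(f_1(\beta), \dots, f_m(\beta))$ is isotropic for $q$ (because $F(\beta) = p(\beta)g(\beta) = 0$ as $h \mid g$) and nonzero (otherwise $h$ would divide every $f_i$, contradicting $\gcd(f_i) = 1$). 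Thus $Q$ has a $k(\beta)$-point, and the inductive hypothesis yields a $k$-point of $Q$ --- the desired contradiction.

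The only step that is not bookkeeping is the claim that $\deg F$ equals $2d$ exactly: this is where anisotropy of $q$ over $k$ is used, and it is precisely what forces $\deg g = 2d - n$ to be odd, hence produces the strictly smaller odd-degree subextension $k(\beta)/k$ on which to iterate. I therefore expect this parity argument to be the crux; the reduction to the primitive-element case, the translation between quadrics and quadratic forms, and the gcd normalizations are all routine.
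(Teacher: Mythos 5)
Your proof is correct and complete: it is the classical argument of Springer (reduce to the primitive case $K=k(\alpha)$, normalize the representing polynomials $f_i$ to have gcd $1$, and use anisotropy of $q$ over $k$ to pin down $\deg F = 2d$, so that $F/p$ has odd degree $<n$ and hence an odd-degree irreducible factor producing a smaller odd-degree subextension to induct on). The one step you flag as the crux --- that the coefficient of $t^{2d}$ in $F$ is $q(c_1,\dots,c_m)\neq 0$ --- is indeed where the hypothesis is used, and your handling of the degenerate possibilities (the gcd not vanishing at $\alpha$, $g\neq 0$, the tuple $(f_i(\beta))$ nonzero because $h\nmid\gcd(f_i)$) is all in order. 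For comparison: the paper does not prove this proposition at all; it is quoted as a known result with a citation to Springer's original note, since the theorem is only used as a black box to convert an odd-degree multisection into a rational point of the generic fibre. So your write-up supplies a proof the paper deliberately omits, and it is the standard one.
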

It therefore suffices to find a $K$-point on $X_\eta$ for some field extension $K/k$ of odd degree.
This can be achieved through an odd degree multisection of $\pi$,
i.\,e.\ a surface $Z\subset X$
such that $[Z]\cup\pi^*[p]\in H^{4,4}(X,\ZZ)\cong\ZZ$ is odd where $[p]\in H^{2,2}(\PP^2,\ZZ)\cong\ZZ$ denotes the cohomology class of a closed point,
since the function field $K=\CC(Z)$ is such a field extension then.

The integral Hodge conjecture was proven for $(2,2)$-classes on quadric bundles over surfaces by Colliot-Thélène and Voisin \cite[Corollaire~8.2]{ct-voisin}.
We use the following special case:
\begin{proposition}[Colliot-Thélène--Voisin]\label{prop:hodge-conjecture}
Let $\pi\colon X\to\PP^2$ be a smooth quadric surface bundle.
Then the integral Hodge conjecture holds for $H^{2,2}(X,\ZZ)$, i.\,e.\ any integral Hodge class $\alpha\in H^{2,2}(X,\ZZ)$
is an integral linear combination $\alpha=\sum n_i[Z_i]$ for surfaces $Z_i\subset X$.
\end{proposition}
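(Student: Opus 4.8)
\emph{Proof sketch.}
The plan is to follow the approach of Colliot-Thélène--Voisin and reduce the statement to a vanishing result for unramified cohomology. First I would note that $X$ is rationally connected: the general fibre of $\pi$ is a smooth quadric surface, hence a Fano variety and in particular rationally connected, so by Graber--Harris--Starr the total space $X$ of such a family over the rationally connected base $\PP^2$ is rationally connected. Consequently $CH_0(X)=\ZZ$, $H^i(X,\Ocal_X)=0$ for $i>0$, and, by the Bloch--Srinivas decomposition of the diagonal, $H^3_{\mathrm{nr}}(X,\QQ)=0$. Granting the comparison theorem of Colliot-Thélène--Voisin --- which under these hypotheses reduces the vanishing of the cokernel of the cycle class map $CH^2(X)\to H^{2,2}(X,\ZZ)$ to that of the unramified cohomology group $H^3_{\mathrm{nr}}(X,\QQ/\ZZ(2))$ --- it suffices to show that the latter vanishes. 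Since $H^3_{\mathrm{nr}}$ is a birational invariant depending only on the field $\CC(X)=F(Q)$, where $F=\CC(\PP^2)$ and $Q=X_\eta$ is the generic quadric surface, we are reduced to a statement about the quadric surface $Q$ over $F$.

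It is enough to treat each primary component of $H^3_{\mathrm{nr}}(F(Q)/\CC,\QQ/\ZZ(2))$ separately. We may assume $Q$ is anisotropic over $F$, for otherwise $\pi$ admits a rational section, $X$ is rational over $\CC$, and the assertion is trivial. Intersecting $Q$ with a line of $\PP^3_F$ defined over $F$ and in general position yields a closed point of $Q$ of degree exactly $2$; let $\kappa$ be its residue field, a quadratic extension of $F$, say $\kappa=\CC(S')$ for a smooth projective surface $S'$. Over $\kappa$ the quadric $Q$ acquires a rational point, hence is $\kappa$-rational, so $\kappa(Q)$ is purely transcendental over $\kappa$ and therefore, for every odd prime $\ell$,
\[ H^3_{\mathrm{nr}}\bigl(\kappa(Q)/\CC,\QQ_\ell/\ZZ_\ell(2)\bigr)=H^3_{\mathrm{nr}}\bigl(\kappa/\CC,\QQ_\ell/\ZZ_\ell(2)\bigr)=0, \]
the last equality because $\mathrm{cd}_\ell(\kappa)=2$, $\kappa$ being the function field of a complex surface. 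A restriction--corestriction argument along the degree-$2$ extension $\kappa(Q)/F(Q)$ --- the composite $\mathrm{cores}\circ\mathrm{res}$ being multiplication by $2$, hence bijective on $\ell$-primary torsion --- then forces $H^3_{\mathrm{nr}}(F(Q)/\CC,\QQ_\ell/\ZZ_\ell(2))=0$ for every odd $\ell$.

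The remaining $2$-primary part is the heart of the matter and the step I expect to be the main obstacle. Since being unramified with respect to all geometric valuations of $F(Q)$ is a stronger condition than being unramified only with respect to those trivial on $F$, one has $H^3_{\mathrm{nr}}(F(Q)/\CC,\ZZ/2)\subseteq H^3_{\mathrm{nr}}(F(Q)/F,\ZZ/2)$, so it suffices to show that $H^3_{\mathrm{nr}}(F(Q)/F,\ZZ/2)=0$. Here I would invoke the description of this group, due to Arason and to Kahn--Rost--Sujatha, in terms of the classical invariants --- discriminant, Clifford invariant, Arason invariant --- of the underlying four-dimensional quadratic form; combined with the fact that $\mathrm{cd}_2(F)=2$, which forces $H^3(F,\ZZ/2)=0$ and controls the relevant invariants over $F$ and over its quadratic extensions, this gives the desired vanishing. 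After the routine passage from $\ZZ/2$ to $\QQ_2/\ZZ_2$ coefficients, the $2$-primary part of $H^3_{\mathrm{nr}}(F(Q)/\CC,\QQ/\ZZ(2))$ vanishes as well. Combined with the preceding paragraph this yields $H^3_{\mathrm{nr}}(X,\QQ/\ZZ(2))=0$, whence by the comparison theorem the integral Hodge conjecture holds for $H^{2,2}(X,\ZZ)$. In short, the reduction to unramified cohomology (given the comparison theorem) and the odd-primary vanishing are essentially formal, while the $2$-primary analysis of four-dimensional quadratic forms over $\CC(\PP^2)$ is the real content.
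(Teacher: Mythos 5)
The paper itself offers no proof of this proposition: it is quoted as a special case of \cite[Corollaire~8.2]{ct-voisin}, and your sketch is essentially an outline of that cited proof — the Bloch--Srinivas/comparison-theorem reduction (for the rationally connected total space) to the vanishing of $H^3_{\mathrm{nr}}(\CC(X)/\CC,\QQ/\ZZ(2))$, the odd-primary vanishing via a degree-$2$ closed point of the generic quadric together with restriction--corestriction and $\mathrm{cd}=2$ of function fields of surfaces, and the $2$-primary part via the Arason/Kahn--Rost--Sujatha analysis of four-dimensional forms over the $\mathrm{cd}_2=2$ field $\CC(\PP^2)$. So your proposal is correct in strategy and takes the same route as the source the paper relies on; the one place where you defer to literature (the $2$-primary step) is exactly where the content of the cited theorem lies, which is acceptable given that the paper treats the whole statement as a black box.
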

This allows us to transform the assertion of $\pi$ having an odd degree multisection into a Hodge theoretic condition (see also \cite[Proposition~6]{hpt-quadrics}):
\begin{corollary}\label{cor:rationality}
Let $\pi\colon X\to\PP^2$ be a smooth quadric surface bundle.
Then $X$ is rational if there exists an integral Hodge class
$\alpha\in H^{2,2}(X,\ZZ)$ such that $\alpha\cup\pi^*[p]$ is odd.
\end{corollary}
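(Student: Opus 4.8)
The plan is to combine Propositions~\ref{prop:springer} and~\ref{prop:hodge-conjecture} in the manner already outlined in the preceding discussion. Suppose $\alpha\in H^{2,2}(X,\ZZ)$ is an integral Hodge class with $\alpha\cup\pi^*[p]$ odd. First I would invoke Proposition~\ref{prop:hodge-conjecture} to write $\alpha=\sum_i n_i[Z_i]$ for subvarieties $Z_i\subset X$ of codimension two, i.\,e.\ surfaces. Then $\alpha\cup\pi^*[p]=\sum_i n_i\bigl([Z_i]\cup\pi^*[p]\bigr)$ in $H^{4,4}(X,\ZZ)\cong\ZZ$, and since this integer is odd, at least one summand $[Z_i]\cup\pi^*[p]$ must be odd. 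Fix such an index $i$ and set $Z=Z_i$.

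The next step is to interpret the intersection number $[Z]\cup\pi^*[p]$ geometrically. For a general closed point $p\in\PP^2$, the fibre $\pi^{-1}(p)$ is a smooth quadric surface, and the number $[Z]\cup\pi^*[p]$ equals the number of intersection points of $Z$ with $\pi^{-1}(p)$ counted with multiplicity; since $Z$ dominates $\PP^2$ (as the intersection number is nonzero), this is the degree of the generically finite morphism $Z\to\PP^2$ obtained by restricting $\pi$. Hence $Z\to\PP^2$ has odd degree, so the function field extension $K=\CC(Z)$ over $k=\CC(\PP^2)$ is finite of odd degree. Restricting $Z$ to the generic fibre, the generic point of $Z$ gives a $K$-point on $X_\eta\times_k K$, so the smooth quadric surface $X_\eta$ over $k$ acquires a point after the odd degree extension $K/k$.

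Now Proposition~\ref{prop:springer}, applied to the quadric $Q=X_\eta$ over $k$ and the odd degree extension $K/k$, yields a $k$-rational point on $X_\eta$. A smooth quadric surface over a field with a rational point is rational over that field (it becomes birational to $\PP^2_k$ by projection from the point), so $X_\eta$ is rational over $k=\CC(\PP^2)$. Since $\PP^2$ is rational over $\CC$, it follows that $X$ is rational over $\CC$, as $\CC(X)=k(X_\eta)$ is a purely transcendental extension of $\CC$.

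The only delicate point is the identification of $[Z]\cup\pi^*[p]$ with the degree of $Z\to\PP^2$, and in particular ensuring that some $Z_i$ in the decomposition genuinely dominates $\PP^2$; but this is forced because any $Z_i$ contained in a fibre (or in the preimage of a curve) contributes zero to $[Z_i]\cup\pi^*[p]$, so the odd summand must come from a dominating component. Everything else is a direct chaining of the two cited propositions with the standard fact that a quadric with a rational point is rational, so the proof is short.
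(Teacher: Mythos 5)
Your argument is correct and follows exactly the route the paper takes: apply Proposition~\ref{prop:hodge-conjecture} to realize $\alpha$ by algebraic surfaces, extract an odd degree multisection $Z$ from the parity of $\alpha\cup\pi^*[p]$, and use Springer's theorem over $K=\CC(Z)$ to produce a $k$-point on the generic fibre, whence rationality. The extra details you supply (that the odd summand forces some $Z_i$ to dominate $\PP^2$, and the identification of the intersection number with the degree of $Z\to\PP^2$) are precisely the implicit steps in the paper's discussion preceding the corollary.
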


Now let us consider the family $\Xcal\to B$ of smooth quadric surface bundles over $\PP^2$ of type $(d_0,d_1,d_2,d_3)$ for
fixed integers $d_j\ge0$ of the same parity.
In order to prove Theorem~\ref{thm:main}, it is enough by Corollary~\ref{cor:rationality} to show that the Noether--Lefschetz locus
\[ \{ b\in B \mid \exists\alpha\in H^{2,2}(\Xcal_b,\ZZ)\colon\alpha\cup\pi_b^*[p]\equiv1\pmod2 \} \]
is dense in $B$ for the Euclidean topology, where $\pi_b\colon\Xcal_b\to\PP^2$ denotes the quadric bundle structure on the fibre $\Xcal_b$.

Since it is easier to compute, we consider instead the \emph{vanishing cohomology}
\[ H^4_\van(\Xcal_b,\CC)=\{\alpha\in H^4(\Xcal_b,\CC)\mid\alpha\cup\iota^*\beta=0\ \forall\beta\in H^4(\PP(\Ecal),\CC)\} \]
where the map $\iota^*\colon H^4(\PP(\Ecal),\CC)\hookrightarrow H^4(\Xcal_b,\CC)$ is induced by inclusion and
is injective by the Lefschetz hyperplane theorem, provided that not all $d_j$ are simultaneously zero\footnote{If
$d_j=0$ for all $j$, Theorem~\ref{thm:main} is trivial because a quadric surface bundle of type $(0,0,0,0)$ is the product
of $\PP^2$ with a quadric surface in $\PP^3$ and hence rational.}.
This construction is also applicable to the Hodge groups $H^{p,q}$ and gives a decomposition
\[ H^4_\van(\Xcal_b,\CC)=\bigoplus_{p+q=4}H^{p,q}_\van(\Xcal_b) \;. \] We then want to show that the possibly smaller locus
\begin{equation}
\{ b\in B \mid \exists\alpha\in H^{2,2}_\van(\Xcal_b,\ZZ)\colon\alpha\cup\pi_b^*[p]\equiv1\pmod2 \} \label{eq:locus-van}
\end{equation}
is dense in $B$ for the Euclidean topology.
To achieve this, we utilise a variant of Voisin's description in \cite[Proposition~5.20]{voisin2} of an infinitesimal density criterion due to Green \cite[Section~5]{chm}.

On $B$ we consider the holomorphic vector bundle $\Hcal$ with fibre $\Hcal_b=H^4_\van(\Xcal_b,\CC)$ at $b\in B$.
By Ehresmann's lemma, $\Hcal$ is trivial over any contractible open subset of $B$.
The vector bundle $\Hcal$ is flat with respect to the Gauß--Manin connection $\nabla\colon\Hcal\to\Hcal\otimes\Omega_B$.
Since $H^{4,0}_\van(\Xcal_b)=H^{0,4}_\van(\Xcal_b)=0$ for all $b\in B$,
each fibre of $\Hcal$ has a Hodge filtration of weight~$2$.
It is well known that the Hodge filtration on the fibres of $\Hcal$ induces a filtration
\[ F^2\Hcal \subset F^1\Hcal \subset F^0\Hcal=\Hcal \]
by holomorphic subbundles. These satisfy Griffiths' transversality condition
\[ \nabla\left(F^p\Hcal^k\right) \subset F^{p-1}\Hcal^k\otimes\Omega_B \] for all $p$ and hence $\nabla$ gives rise to an $\Ocal_B$-linear map
\[ \ol\nabla\colon \Hcal^{1,1} \to \Hcal^{0,2}\otimes\Omega_B \]
on the quotients $\Hcal^{p,2-p}=F^p\Hcal/F^{p+1}\Hcal$.
Fibrewise, we obtain by adjunction the infinitesimal period map
\[ \ol\nabla_b\colon T_{B,b}\to\Hom\left(\Hcal^{1,1}_b,\Hcal^{0,2}_b\right) \]
for all $b\in B$.
Note that we may identify $\Hcal^{p,q}_b$ with $H^{p+1,q+1}_\van(\Xcal_b)$ for $p+q=2$.

Let $\Hcal_\RR$ be the real vector bundle on $B$ with fibre $\Hcal_{\RR,b}=H^4_\van(\Xcal_b,\RR)$ at $b\in B$.
Then we have $\Hcal_b=\Hcal_{\RR,b}\otimes_\RR\CC$ for all $b\in B$.
Similarly, for the real vector subbundle \[ \Hcal^{1,1}_\RR=\Hcal_\RR\cap F^1\Hcal\subset\Hcal_\RR \]
with fibre $\Hcal^{1,1}_{\RR,b}=H^{2,2}_\van(\Xcal_b,\RR)$ at $b\in B$
we have $\Hcal^{1,1}_b\cong\Hcal^{1,1}_{\RR,b}\otimes_\RR\CC$ for all $b\in B$.
The last identification is given by the restricted projection \[ p\colon\Hcal^{1,1}_\RR\subset F^1\Hcal\to F^1\Hcal/F^2\Hcal=\Hcal^{1,1} \;. \]
For all $b\in B$, let us consider the discrete subset
\[ D\Hcal_b = \{ \alpha\in H^4_\van(\Xcal_b,\ZZ) \mid \alpha\cup\pi_b^*[p]\equiv1\pmod2 \} \subset \Hcal_{\RR,b} \;. \]
Since $D\Hcal_b$ is defined by a topological property of $\Xcal_b$ which is compatible with the local trivializations of $\Xcal$ from Ehresmann's lemma
(it does in particular not depend on the Hodge filtration on $\Hcal_b$), we obtain a fibre subbundle $D\Hcal\subset\Hcal_\RR$ which is trivial over any contractible open subset of $B$.
Note that the locus \eqref{eq:locus-van} is precisely the image of the projection map $D\Hcal\cap\Hcal^{1,1}_\RR\to B$.
Our variant of \cite[Proposition~5.20]{voisin2} can now be stated as follows:
\begin{proposition}[Green--Voisin]\label{prop:criterion}
Suppose there exists $b\in B$ and $\ol\lambda\in\Hcal^{1,1}_b$ such that
the infinitesimal period map evaluated at $\ol\lambda$
\[ \ol\nabla_b(\ol\lambda)\colon T_{B,b}\to\Hcal^{0,2}_b \] is surjective.
Then the projection of $D\Hcal\cap\Hcal^{1,1}_\RR$ is dense in $B$ for the Euclidean topology.
\end{proposition}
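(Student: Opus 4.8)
The plan is to verify the hypotheses of Voisin's density criterion \cite[Proposition~5.20]{voisin2} in our concrete setting; since the statement is essentially a reformulation of Voisin's, the proof will consist in checking that our notation matches hers and that the objects $D\Hcal$, $\Hcal^{1,1}_\RR$, $\ol\nabla_b$ play the roles required there. First I would recall the structure of Voisin's argument: one works on a small contractible (say, polydisc) neighbourhood $U\subset B$ of the given base point $b$, over which $\Hcal_\RR$, $\Hcal$, and $D\Hcal$ are all trivial by Ehresmann's lemma; the flat sections of $\Hcal_\RR$ give a locally constant trivialisation in which the Hodge filtration $F^1\Hcal$ varies holomorphically. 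A point $b'\in U$ lies in the locus \eqref{eq:locus-van} precisely when some (nonzero) flat section of $D\Hcal$ meets $F^1\Hcal_{b'}$, equivalently when its projection to the quotient bundle $\Hcal/F^1\Hcal$ vanishes at $b'$. Thus the locus is a countable union of analytic subsets $Z_\alpha\subset U$, one for each $\alpha\in D\Hcal_b$ (transported by the flat trivialisation), and density amounts to showing that the union of those $Z_\alpha$ which are nonempty is dense.

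The key step is the standard local-surjectivity (openness) argument. Fix $\ol\lambda\in\Hcal^{1,1}_b$ with $\ol\nabla_b(\ol\lambda)\colon T_{B,b}\to\Hcal^{0,2}_b$ surjective. Using the identification $\Hcal^{1,1}_b\cong\Hcal^{1,1}_{\RR,b}\otimes_\RR\CC$ afforded by the projection $p$, I would choose a real lift: an element $\lambda_\RR\in\Hcal^{1,1}_{\RR,b}\subset F^1\Hcal_b$ whose image under $p$ equals $\ol\lambda$ (after possibly adjusting by a real scalar this is harmless). Extend $\lambda_\RR$ to a flat (locally constant) section $\sigma$ of $\Hcal_\RR$ over $U$; by Griffiths transversality the composite $\phi\colon U\to\Hcal/F^1\Hcal$, $b'\mapsto[\sigma(b')]$, is holomorphic with $\phi(b)=0$, and its derivative at $b$ is, up to the canonical identifications, exactly $\ol\nabla_b(\ol\lambda)\colon T_{B,b}\to\Hcal^{1,1}_b/\Hcal^{1,1}_b=\Hcal^{0,2}_b$ — here one uses that $\Hcal^{0,2}_b=(F^1/F^2)^{?}$... more precisely the differential lands in $(\Hcal/F^1)_b=\Hcal^{0,2}_b$ and coincides with $\ol\nabla_b$ applied to the class of $\sigma(b)$ in $F^1/F^2$. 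Since this differential is surjective, $\phi$ is a submersion at $b$, hence open near $b$; in particular $\phi^{-1}(0)$ — which is contained in the locus \eqref{eq:locus-van} because the corresponding flat section then lies in $F^1$ — has, after a small deformation of $\sigma$ by nearby integral classes, images filling a neighbourhood of $b$.

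The actual density then follows by a perturbation/Baire-type argument, exactly as in \cite[Section~5]{chm} and \cite[Proposition~5.20]{voisin2}: for each integral class $\alpha\in D\Hcal_{b'}$ near $\lambda_\RR$ one gets a (possibly empty) analytic hypersurface-type locus, and openness of the submersion $\phi$ guarantees that one can always find rational (hence, after clearing denominators, integral) perturbations $\alpha$ of $\lambda_\RR$ inside $\Hcal^{1,1}_{\RR,b'}$ for $b'$ in a dense set; crucially $\alpha\cup\pi_b^*[p]\equiv1\pmod2$ is preserved under small deformation by integral classes divisible by $2$, so one stays inside $D\Hcal$. Running this over a countable dense set of base points and invoking that a countable union of the resulting open sets is dense yields the claim. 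The only point requiring care — and the one I would flag as the main obstacle in writing the argument cleanly — is the precise identification of the derivative of $\phi$ with $\ol\nabla_b(\ol\lambda)$ through the chain of canonical isomorphisms $\Hcal^{p,q}_b\cong H^{p+1,q+1}_\van(\Xcal_b)$, $\Hcal^{1,1}_b\cong\Hcal^{1,1}_{\RR,b}\otimes_\RR\CC$, and the adjunction defining $\ol\nabla_b$ from $\ol\nabla$; all of this is routine but must be lined up so that "surjective" on one side really does give "submersion" on the other. As this is entirely parallel to \cite[Proposition~5.20]{voisin2}, I would keep the exposition brief and refer there for the details of the Baire argument.
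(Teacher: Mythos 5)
Your outline follows the same Green--Voisin template as the paper (submersivity of the period map forces density of a Noether--Lefschetz-type locus), but it has two genuine gaps that the paper's proof has to address and yours does not. First, you never establish that $D\Hcal_b\ne\emptyset$. The set $D\Hcal_b$ is cut out by the affine condition $\alpha\cup\pi_b^*[p]\equiv1\pmod2$, and whether any integral class satisfies it is a nontrivial topological fact about the fibres; if $D\Hcal_b=\emptyset$ the statement is simply false (the projection of the empty set is not dense). The paper gets this from \cite[Lemma~20]{schreieder-long}: there exist smooth quadric surface bundles of the given type admitting a rational section, hence a degree-one multisection class, and connectedness of $B$ plus local triviality of $D\Hcal$ transports nonemptiness to every fibre. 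Second, your perturbation step is broken at exactly the point where this particular $D\Hcal$ differs from the classical Noether--Lefschetz setting: the condition $\alpha\cup\pi_b^*[p]\equiv1\pmod2$ is \emph{not} preserved by clearing denominators (multiplying $\alpha$ by an even integer destroys the parity), so "rational, hence after clearing denominators integral, perturbations" do not stay in $D\Hcal$. The paper circumvents this by working with $\RR^*D\Hcal_b$, which is dense in $\Hcal_{\RR,b}$ precisely because $D\Hcal_b$ is a coset of an index-two subgroup, and by observing only at the very end that the projections to $B$ of $D\Hcal\cap\Hcal^{1,1}_\RR$ and $(\RR^*D\Hcal)\cap\Hcal^{1,1}_\RR$ coincide since $\Hcal^{1,1}_\RR$ is stable under real scaling.

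A smaller but still real issue: you "choose a real lift $\lambda_\RR$ with $p(\lambda_\RR)=\ol\lambda$", but the given $\ol\lambda\in\Hcal^{1,1}_b$ need not lie in the image of $p|_{\Hcal^{1,1}_{\RR,b}}$, and adjusting by a real scalar does not fix this. The correct move, which the paper makes first, is to note that surjectivity of $\ol\nabla_b(\ol\lambda)$ is a Zariski open condition in $\ol\lambda$, hence holds on a dense subset of the real classes $p(\Hcal^{1,1}_\RR)$, so one may replace $\ol\lambda$ by $p(\lambda)$ for some genuinely real $\lambda$. With these three points repaired, your argument would align with the paper's, which otherwise packages the submersion statement as \cite[Lemma~5.21]{voisin2} applied to the map $\Hcal^{1,1}_\RR\to\Hcal_{\RR,b}$ on the total space rather than section by section.
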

\begin{proof}
We first observe that the surjectivity condition is a Zariski open property on $\ol\lambda\in\Hcal^{1,1}=\Hcal^{1,1}_\RR\otimes_\RR\CC$.
Hence, the condition is fulfilled on a dense open subset of the real classes $p(\Hcal^{1,1}_\RR)\subset\Hcal^{1,1}$.
Therefore, it suffices to show the statement locally around $b\in B$ where $\ol\lambda=p(\lambda)$ satisfies the hypothesis for some $\lambda\in\Hcal^{1,1}_{\RR,b}$.
By shrinking $B$, we may assume that the vector bundle $\Hcal_\RR$ is trivial over $B$, i.\,e.\ $\Hcal_\RR\cong B\times\Hcal_{\RR,b}$.
By \cite[Lemma~5.21]{voisin2}, the composed map
\[ \phi\colon \Hcal^{1,1}_\RR \hookrightarrow \Hcal_\RR \cong B\times\Hcal_{\RR,b} \to \Hcal_{\RR,b} \]
obtained via inclusion, isomorphism and projection is a submersion at $\lambda\in\Hcal^{1,1}_\RR$.
As shown in \cite[Lemma~20]{schreieder-long}, there are smooth quadric surface bundles $\Xcal_u$ of type $(d_0,d_1,d_2,d_3)$ which admit a rational section and hence $D\Hcal_u\ne\emptyset$.
Since $B$ is connected, it follows that $D\Hcal_b\ne\emptyset$.
By definition, $D\Hcal_b$ is a coset of a subgroup of $H^4_\van(\Xcal_b,\ZZ)$ of index~$2$.
Therefore, $\RR^*D\Hcal_b$ is dense in $\Hcal_{\RR,b}=H^4_\van(\Xcal_b,\ZZ)\otimes\RR$.
Since $\phi$ is a submersion,
the preimage $\phi^{-1}(\RR^*D\Hcal_b)$ is dense around $\lambda\in\Hcal^{1,1}_\RR$. But this precisely means $(\RR^*D\Hcal)\cap\Hcal^{1,1}_\RR$ is dense in $\Hcal^{1,1}_\RR$ around $\lambda$.
Hence, its projection is dense around $b\in B$. But the projections of $D\Hcal\cap\Hcal^{1,1}_\RR$ and $(\RR^*D\Hcal)\cap\Hcal^{1,1}_\RR$ agree because $\Hcal^{1,1}_\RR$ is a real vector bundle.
\end{proof}
Actually, the above proof works for any fibre bundle $D\Hcal\subset\Hcal_\RR$, trivial over contractible open subsets of $B$, such that $\RR^*D\Hcal_b$ is dense in $\Hcal_{\RR,b}$ for some $b\in B$.
This leads to a more general version of Proposition~\ref{prop:criterion}, which can be found in \cite[Section~3.3]{mythesis}.

\section{Computation of the Cohomology}\label{sect:toric}

We first give a more precise definition of standard quadric surface bundles over $\PP^2$, following \cite[Section~3.5]{schreieder-long}.
Let \[ \Ecal=\bigoplus_{j=0}^3\Ocal_{\PP^2}(-r_j) \] be a split vector bundle on $\PP^2$ for integers $r_j\ge0$
and let $q\colon\Ecal\to\Ocal_{\PP^2}(d)$ be a quadratic form for some integer $d\ge0$, i.\,e.\ a global section of
$\Sym^2\Ecal^\vee\otimes\Ocal_{\PP^2}(d)$.
Let us assume that the quadratic form $q_\eta$ at the generic point $\eta\in\PP^2$ is non-degenerate and that $q_s\ne0$ for all $s\in\PP^2$.
Then the zero set $X\subset\PP(\Ecal)$ of $q$ is a quadric surface bundle over $\PP^2$.
Since the vector bundle $\Sym^2\Ecal^\vee\otimes\Ocal_{\PP^2}(d)$ only depends on the integers $d_j=2r_j+d$ for $j\in\{0,1,2,3\}$,
we call $X$ a standard quadric surface bundle of type $(d_0,d_1,d_2,d_3)$.
Conversely, quadric surface bundles of type $(d_0,d_1,d_2,d_3)$ for given integers $d_j\ge0$
exist whenever $d_0,d_1,d_2,d_3$ are of the same parity\footnote{One can always ensure $d\in\{0,1\}$, but this is not needed in our arguments.}.
Since
\[ H^0\left(\PP^2,\Sym^2\Ecal^\vee\otimes\Ocal_{\PP^2}(d)\right) \cong \bigoplus_{0\le i\le j\le 3}H^0\left(\PP^2,\Ocal_{\PP^2}(r_i)\otimes\Ocal_{\PP^2}(r_j)\otimes\Ocal_{\PP^2}(d)\right) = V \]
where $V$ was defined in \eqref{eq:global}, $X$ can be described by an equation of the form \eqref{eq:quadratic-form} where $y_j$ is a local
trivialization of $\Ocal_{\PP^2}(-r_j)$.

We now aim to interpret \eqref{eq:quadratic-form} differently as a global equation inside the polynomial ring 
\[ S = \CC[x_0,x_1,x_2;y_0,y_1,y_2,y_3] \] endowed with a non-standard bigrading.
By \cite[Example~7.3.5]{cls-toric}, the total space $\PP(\Ecal)$ is a toric variety
associated to a fan $\Sigma$ in $\RR^2\times\RR^3$ and has coordinate ring $S$.
If $u_1,u_2$ and $v_1,v_2,v_3$ denote the standard basis vectors of $\RR^2$ and $\RR^3$, respectively, then the seven $1$-dimensional cones of $\Sigma$
are generated by $u_0,u_1,u_2,v_0,v_1,v_2,v_3$ where
\[ u_0 = -\sum_{k=1}^2u_k+\sum_{j=1}^3(r_j-r_0)v_j \quad\text{and}\quad v_0 = -\sum_{j=1}^3v_j \;. \]
Further, the maximal cones of $\Sigma$ are given by
\[ \left< u_0,\ldots,\hat u_k,\ldots,u_2, v_0,\ldots,\hat v_j,\ldots,v_3 \right> \;,\quad k\in\{0,1,2\}\;,\quad j\in\{0,1,2,3\} \;. \]
By \cite[Definition~1.7]{batyrev-cox}, we have $\Cl(\Sigma)\cong\ZZ^7/\Im C$ where
\[ C=\begin{pmatrix} -1 & -1 & r_1-r_0 & r_2-r_0 & r_3-r_0 \\ 1 & 0 & 0 & 0 & 0 \\ 0 & 1 & 0 & 0 & 0 \\ 0 & 0 & -1 & -1 & -1 \\
0 & 0 & 1 & 0 & 0 \\ 0 & 0 & 0 & 1 & 0 \\ 0 & 0 & 0 & 0 & 1 \end{pmatrix} \in \Hom(\ZZ^5,\ZZ^7) \;. \]
It is easy to see that the surjection
\begin{align*}
\ZZ^7 &\to \ZZ^2 \\
(m_0,m_1,m_2,n_0,n_1,n_2,n_3) &\mapsto \left(\sum_{k=0}^2m_k-\sum_{j=0}^3n_jr_j,\sum_{j=0}^3n_j\right)
\end{align*}
has kernel $\Im C$. Hence, this map descends to an isomorphism $\Cl(\Sigma)\cong\ZZ^2$ and endowes the coordinate ring $S$ with the non-standard bigrading
\[ \deg x_k=(1,0)\;,\quad\deg y_j=(-r_j,1) \]
for $k\in\{0,1,2\}$ and $j\in\{0,1,2,3\}$.
For $m,n\in\ZZ$, we denote by $S(m,n)$ the subspace of homogeneous polynomials of bidegree $(m,n)$ in $S$.
This gives a decomposition
\[ S = \bigoplus_{m,n\in\ZZ}S(m,n) \]
into finite dimensional $\CC$-vector spaces.

A quadratic form $q\colon\Ecal\to\Ocal_{\PP^2}(d)$ corresponds to an element in $S(d,2)$.
In this way, the local description \eqref{eq:quadratic-form} of the zero set of $q$ can be seen globally
as a defining equation for a toric hypersurface $X\subset\PP(\Ecal)$.

This allows us to compute the middle cohomology groups of a smooth quadric surface bundle $\pi\colon X\to\PP^2$ of type $(d_0,d_1,d_2,d_3)$
defined by a polynomial $f\in S(d,2)$ via the method of \cite[Theorem~10.13]{batyrev-cox},
which generalizes the work of Griffiths \cite{griffiths69} to toric hypersurfaces.
We have canonical isomorphisms
\[ H^{1,3}_\van(X)\cong R(t,4) \quad\text{and}\quad H^{2,2}_\van(X)\cong R(t-d,2) \]
where \[ t=4d-3+r_0+r_1+r_2+r_3 \] and where $R$ denotes the Jacobian ring of $f$,
i.\,e.\ the quotient of $S$ by all partial derivatives of $f$.

Now we return to the family $\Xcal\to B$ of smooth quadric surface bundles of type $(d_0,d_1,d_2,d_3)$.
If we identify $T_{B,b}\cong(S/fS)(d,2)$ where $f\in S(d,2)$ is the defining equation of $\Xcal_b$ for some $b\in B$,
then the infinitesimal period map
\[ \ol\nabla_b\colon T_{B,b}\otimes H^{2,2}_\van(\Xcal_b) \to H^{1,3}_\van(\Xcal_b) \]
is given, up to a sign, as the multiplication map
\[ (S/fS)(d,2)\otimes R(t-d,2) \to R(t,4) \;. \]
This was first shown for hypersurfaces in projective space by Carlson and Griffiths \cite{carlson-griffiths}, see also \cite[Theorem~6.13]{voisin2}.
In order to show that the assumption of Proposition~\ref{prop:criterion} holds and thus to prove Theorem~\ref{thm:main},
it therefore suffices to provide polynomials $f\in S(d,2)$ and $g\in S(t-d,2)$ such that the quadric surface bundle $\{f=0\}\subset\PP(\Ecal)$ is smooth
and the composed map $S(d,2)\to R(t,4)$ given by multiplication with $g$ followed by projection is surjective.
By Bertini's theorem, the hypersurface $\{f=0\}\subset\PP(\Ecal)$ is smooth for a general polynomial $f\in S(d,2)$.
The surjectivity part is equivalent to claiming that the ideal generated by $g$ and all partial derivatives of $f$ contains all polynomials in $S(t,4)$.
Consequently, we reduced Theorem~\ref{thm:main} to the following statement:
\begin{proposition}\label{prop:main}
For general polynomials $f\in S(d,2)$ and $g\in S(t-d,2)$, the ideal in $S$ generated by the polynomials
\[ \frac{\partial f}{\partial x_0}\;,\quad\frac{\partial f}{\partial x_1}\;,\quad\frac{\partial f}{\partial x_2}\;,\quad
\frac{\partial f}{\partial y_0}\;,\quad\frac{\partial f}{\partial y_1}\;,\quad\frac{\partial f}{\partial y_2}\;,\quad\frac{\partial f}{\partial y_3}\;,\quad g \]
contains all polynomials in $S(t,4)$.
\end{proposition}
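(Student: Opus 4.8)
The plan is to reduce the statement, by semicontinuity, to the existence of a single good pair $(f,g)$, to produce such a pair with $f$ of a degenerate Fermat-type shape, and to deduce the required surjectivity from the strong Lefschetz property of a monomial complete intersection --- this being the quadric-bundle analogue of the classical computation behind the density of the Noether--Lefschetz locus for surfaces in $\PP^3$. Concretely, since both sides are $\ZZ^2$-graded, the assertion is equivalent to surjectivity of the map
\[
S(d,2)\,\oplus\,\textstyle\bigoplus_{k=0}^{2}S(t-d+1,2)\,\oplus\,\textstyle\bigoplus_{j=0}^{3}S(t-d-r_j,3)\;\longrightarrow\;S(t,4)
\]
sending $(h;a_0,a_1,a_2;b_0,b_1,b_2,b_3)$ to $gh+\sum_k a_k\,\partial f/\partial x_k+\sum_j b_j\,\partial f/\partial y_j$. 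Source and target are fixed finite-dimensional vector spaces and the matrix of the map depends polynomially on the coefficients of $f$ and $g$, so its corank is upper semicontinuous; hence the set of pairs $(f,g)$ for which it is surjective is Zariski open, and it suffices to exhibit one pair in it. Writing $R=S/J_f$ for the Jacobian ring, the condition on a single $f$ is that there be $\bar g\in R(t-d,2)$ with multiplication by $\bar g$ mapping $R(d,2)$ onto $R(t,4)$.

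For the example I would take $f=\sum_{j=0}^{3}\ell_j^{d_j}y_j^2$ with $\ell_0,\dots,\ell_3$ general linear forms on $\PP^2$; since $\ell_j^{d_j}y_j^2$ has bidegree $(d,2)$ we have $f\in S(d,2)$, and this is the natural substitute for the Fermat polynomial. Its Jacobian ideal is generated by the four linear-in-$y$ forms $\ell_j^{d_j}y_j$ and the three quadrics $\sum_j(\partial_{x_k}\ell_j)\ell_j^{d_j-1}y_j^2$. Using the linear relations $\ell_j^{d_j}y_j\equiv0$ to organise the monomials in the $y_j$, the $y$-degree-$n$ part of $R$ becomes a direct sum, indexed by the degree-$n$ monomials $y^b$, of graded pieces of $\CC[x_0,x_1,x_2]/(\ell_j^{d_j}:b_j>0)$, which the three remaining quadratic relations glue together. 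The goal is to show that in the two bidegrees that matter, namely $(d,2)$ and $(t,4)$ with $t=4d-3+r_0+r_1+r_2+r_3$, this gluing identifies $R(d,2)$ and $R(t,4)$ with graded pieces of one ring $A=\CC[x_0,x_1,x_2]/(\ell_{i_0}^{a_0},\ell_{i_1}^{a_1},\ell_{i_2}^{a_2})$ (for three of the $\ell_j$ and suitable exponents), compatibly with multiplication by an appropriately chosen $\bar g$. As the three linear forms involved are independent, $A$ is isomorphic to a monomial complete intersection in three variables.

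By \cite[Proposition~30]{harima-watanabe}, $A$ then has the strong Lefschetz property, so multiplication by a general form of any fixed degree has maximal rank between any two graded pieces of $A$; one takes $\bar g$ to induce such a multiplication. A computation of the Hilbert function of $A$ should yield $\dim R(d,2)\ge\dim R(t,4)$, and a maximal-rank map onto a space of no greater dimension is surjective, which is exactly what is required.

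The hard part is the identification of $R(d,2)$ and $R(t,4)$ with graded pieces of the complete intersection $A$ described above. The polynomial $f$ above is highly singular, so its Jacobian ring is far from Artinian --- already $R(\cdot,0)=\CC[x_0,x_1,x_2]$, and several of the summands $\CC[x_0,x_1,x_2]/(\ell_j^{d_j}:b_j>0)$ are positive-dimensional --- and one must verify that the three relations coming from $\partial f/\partial x_k$ genuinely collapse $R$ onto graded pieces of $A$ precisely in bidegrees $(d,2)$ and $(t,4)$, and determine the indices $i_0,i_1,i_2$ and exponents $a_0,a_1,a_2$ explicitly from $d_0,\dots,d_3$, $d$ and $t$ so that the Lefschetz numerology applies. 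This bookkeeping is the combinatorial core of the proof, to be carried out in Section~\ref{sect:proof}; should the purely diagonal $f$ prove too degenerate, one would instead take a milder choice --- for instance a quadric bundle whose discriminant curve is a general union of lines --- at the cost of a more involved description of $R$.
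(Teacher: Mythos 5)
Your reduction step is fine and matches the paper: by Lemma~\ref{lemma:open} (or your semicontinuity argument) it suffices to exhibit one pair $(f,g)$, and a diagonal $f=\sum_j f_jy_j^2$ is indeed the right shape (the paper even specializes the $f_j$ to sums of powers of linear forms at various points, and Proposition~\ref{prop:lefschetz} of Harima--Watanabe is the key input there too). But everything after that is a genuine gap: the claim that $R(d,2)$ and $R(t,4)$ can be identified with graded pieces of a \emph{single} Artinian complete intersection $A=\CC[x_0,x_1,x_2]/(\ell_{i_0}^{a_0},\ell_{i_1}^{a_1},\ell_{i_2}^{a_2})$, compatibly with multiplication by one class $\ol g$ acting as a Lefschetz power, is not proved, and it is precisely the entire content of the proposition. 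The $y$-degree~$2$ and $y$-degree~$4$ parts of $R$ split over the $y$-monomial types ($y_iy_j$, $y_iy_jy_ky_l$, $y_i^2y_jy_k$, $y_i^3y_j$, $y_i^2y_j^2$, $y_i^4$), and the three relations $\sum_j(\partial_{x_k}f_j)y_j^2$ glue only the diagonal pieces, so there is no single ring $A$ whose graded pieces capture both $R(d,2)$ and $R(t,4)$; the paper must instead treat the four monomial types in $S(t,4)$ by four separate arguments, each invoking a \emph{different} complete intersection (e.g.\ $f_1,f_2,g_{11}$; then $x_0^{d_0+d_2+d_3-1},f_2,f_3$; then $x_0^{a+b+c-1},f_{\tau(0)},f_{\tau(1)}$; then the partials of $f_3$ together with $g_{33}$), plus determinantal relations $\det\hat A_j\,y_i^2\equiv\pm\det\hat A_i\,y_j^2$ extracted from the $x$-derivatives.

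Moreover, the single-ring picture essentially forces a (near-)diagonal $g$, and the paper's concluding remark shows this cannot work in general: with $g_{33}=0$ the argument only goes through when $d_3\le d_0+d_1+d_2+4$, and with $g_{11}=0$ only when $d_3\le d_2+6$; the successful choice is a mixed $g=g_{11}y_1^2+g_{33}y_3^2+\sum_{i<j}g_{ij}y_iy_j$ omitting exactly $g_{00}$ and $g_{22}$, whose off-diagonal terms are used in an essential way (e.g.\ $g_{12},g_{23}$ in the $y_i^2y_j^2$ step). Finally, your closing inequality $\dim R(d,2)\ge\dim R(t,4)$ is neither verified nor sufficient: $R$ is not Artinian Gorenstein (as you note), $\ol g$ is not an element of your ring $A$, and ``maximal rank'' for multiplication by a general class between these non-Gorenstein graded pieces does not follow from the strong Lefschetz property of $A$. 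So while the opening reduction and the Lefschetz toolkit are the right ingredients, the combinatorial core that makes the proposition true for \emph{all} types $(d_0,d_1,d_2,d_3)$ is missing, and the specific route you sketch (one complete intersection, one Lefschetz multiplication, one dimension count) would not deliver it.
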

The remaining part of the paper is devoted to the proof of this proposition.

\section{Preparations}\label{sect:preparations}

The property that a homogeneous ideal in a bigraded polynomial ring (or more generally, an arbitrarily graded $\CC$-algebra) contains all polynomials of a certain bidegree is,
as we now show, a Zariski open condition on its generators if their bidegrees are fixed.
\begin{lemma}\label{lemma:open}
Let $G$ be an Abelian group and let $A$ be a $G$-graded $\CC$-algebra whose homogeneous components $A(m)$ are finite dimensional $\CC$-vector spaces for all $m\in G$.
Let $m_0,\ldots,m_k\in G$. Then the set
\[ \left\{(f_1,\ldots,f_k)\in A(m_1)\oplus\cdots\oplus A(m_k)\mid A(m_0)\subset f_1A+\cdots+f_kA\right\} \]
is Zariski open.
\end{lemma}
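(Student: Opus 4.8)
The plan is to exhibit the set in question as the locus where a certain map of vector bundles (or really, a parametrized family of linear maps) has maximal rank, which is the standard way such conditions turn out to be Zariski open. First I would fix a bidegree $m_0$ and observe that for each $i$ the multiplication map $A(m_i - m_0)^\vee$ is not quite the right object; instead, for a fixed auxiliary bound one should note that membership $A(m_0) \subset f_1 A + \cdots + f_k A$ only involves finitely many homogeneous components. Concretely, $f_i A \cap A(m_0) = f_i \cdot A(m_0 - m_i)$, so the condition is exactly that the $\CC$-linear map
\[ \mu\colon A(m_0 - m_1) \oplus \cdots \oplus A(m_0 - m_k) \to A(m_0)\;, \qquad (a_1,\ldots,a_k) \mapsto \sum_i f_i a_i \]
is surjective. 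All the spaces appearing here are finite dimensional by hypothesis, and the entries of the matrix of $\mu$ (with respect to fixed bases) are bilinear — hence polynomial — in the coefficients of the $f_i$ and the (fixed) structure constants of $A$.

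Next I would invoke the elementary fact that for a family of linear maps between fixed finite-dimensional vector spaces whose matrix entries depend polynomially (indeed regularly) on parameters, the locus of parameters where the map is surjective is Zariski open: surjectivity of $\mu$ is equivalent to the non-vanishing of at least one maximal minor of the matrix of $\mu$ (of size $\dim_\CC A(m_0)$), and each such minor is a polynomial in the coefficients of $f_1,\ldots,f_k$. The surjective locus is therefore the union over all size-$(\dim A(m_0))$ submatrices of the complement of the vanishing locus of that minor, which is a finite union of Zariski open sets, hence Zariski open. If $\dim_\CC A(m_0) = 0$ the condition is vacuously satisfied and the set is all of $A(m_1)\oplus\cdots\oplus A(m_k)$, which is open; if some $A(m_0 - m_i)$ vanishes one simply drops that summand, and if all of them vanish while $A(m_0) \ne 0$ the set is empty, again trivially open.

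I expect no serious obstacle here — the statement is genuinely soft. The only point requiring a little care is the bookkeeping identity $f_i A \cap A(m_0) = f_i A(m_0 - m_i)$, which relies on the grading: a general element of the ideal $f_1 A + \cdots + f_k A$ need not be homogeneous, but its bidegree-$m_0$ component lies in $\sum_i f_i A(m_0 - m_i)$ because multiplication by the homogeneous element $f_i$ shifts bidegree by $m_i$, and one can extract homogeneous components componentwise in a graded ring. Once that reduction to the single linear map $\mu$ is in place, the openness is the classical maximal-rank argument via minors, and the polynomial (in fact bilinear) dependence of the matrix entries on $(f_1,\ldots,f_k)$ is immediate from the definition of the algebra multiplication on homogeneous components. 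This is exactly the form in which the lemma will be applied in Proposition~\ref{prop:main}, with $A = S$, $G = \ZZ^2$, the generators being the seven partials of $f$ together with $g$, and $m_0 = (t,4)$.
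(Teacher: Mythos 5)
Your proof is correct and follows the same route as the paper: reduce the inclusion $A(m_0)\subset f_1A+\cdots+f_kA$ to surjectivity of the multiplication map $A(m_0-m_1)\oplus\cdots\oplus A(m_0-m_k)\to A(m_0)$, whose matrix entries depend linearly on the coefficients of the $f_i$, and then use non-vanishing of maximal minors to get Zariski openness. The extra remarks on degenerate cases and on extracting homogeneous components are fine but not needed beyond what the paper's argument already contains.
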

\begin{proof}
The condition on $(f_1,\ldots,f_k)$ is equivalent to saying that the $\CC$-linear map
\begin{align*}
A(m_0-m_1)\oplus\cdots\oplus A(m_0-m_k) &\to A(m_0) \\
(g_1,\ldots,g_k) &\mapsto f_1g_1+\cdots+f_kg_k
\end{align*}
is surjective. This map is represented by a matrix $B$ with $r=\dim_\CC A(m_0)$ rows, whose entries are linear polynomials in the coefficients of $f_1,\ldots,f_k$.
The locus in $A(m_1)\oplus\cdots\oplus A(m_k)$ where this linear map is not surjective is precisely where the determinants of all $(r\times r)$-submatrices of $B$ vanish (in particular, it is the whole affine space if $B$ has less than $r$ columns) and thus Zariski closed.
Therefore, the set in question is open for the Zariski topology.
\end{proof}
Since taking partial derivatives is a linear and hence Zariski continuous map between the respective $\ZZ^2$-graded pieces of $S$, Lemma~\ref{lemma:open}
shows that the desired condition in Proposition~\ref{prop:main} is Zariski open on $f$ and $g$.

Apart from $S$, we will often apply Lemma~\ref{lemma:open} to the polynomial ring $\CC[x_0,x_1,x_2]$ together with its usual grading.
In this situation, we can give sufficient criteria whether three or four polynomials satisfy the Zariski open condition in the lemma.
More generally, for $n\ge0$ we can give such criteria for $n+1$ and $n+2$ polynomials in the graded polynomial ring
\[ P_n=\CC[x_0,\ldots,x_n]=\bigoplus_{m\ge0}P_n(m) \;. \]
\begin{lemma}\label{lemma:gen3}
If $f_0,\ldots,f_n\in P_n$ form a complete intersection, i.\,e.\ they have no common zero in $\PP^n$, then
\[ P_n(m) \subset f_0P_n + \cdots + f_nP_n \]
for all $m\ge m_0+\cdots+m_n-n$ where $f_j\in P_n(m_j)$ for $j\in\{0,\ldots,n\}$.
\end{lemma}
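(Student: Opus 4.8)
The plan is to read off the statement from the Hilbert function of the Artinian graded ring $A=P_n/(f_0,\ldots,f_n)$. Note first that the complete intersection hypothesis is genuinely needed for the \emph{sharp} degree bound: for any ideal whose projective zero locus is empty the Nullstellensatz gives $P_n(m)\subset f_0P_n+\cdots+f_nP_n$ for all sufficiently large $m$, but only a complete intersection pins down the threshold at $m_0+\cdots+m_n-n$.

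First I would translate the hypothesis into commutative algebra. Saying that $f_0,\ldots,f_n$ have no common zero in $\PP^n$ means that their common zero locus in $\mathbb{A}^{n+1}$ consists of the origin alone, so the ideal $I=(f_0,\ldots,f_n)\subset P_n$ has height $n+1=\dim P_n$. Since $P_n$ is Cohen--Macaulay, an ideal generated by $n+1$ elements and of height $n+1$ is generated by a regular sequence; hence $f_0,\ldots,f_n$ is a regular sequence in $P_n$. This is the one step with actual content; everything afterwards is formal.

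Next I would compute the Hilbert series of $A$. Quotienting a graded ring by a homogeneous nonzerodivisor of degree $m_j$ multiplies the Hilbert series by $(1-t^{m_j})$, in view of the exact sequence $0\to R(-m_j)\to R\to R/(f_j)\to0$; iterating along the regular sequence and starting from $H_{P_n}(t)=(1-t)^{-(n+1)}$ yields
\[ H_A(t)=\frac{1}{(1-t)^{n+1}}\prod_{j=0}^n\bigl(1-t^{m_j}\bigr)=\prod_{j=0}^n\bigl(1+t+\cdots+t^{m_j-1}\bigr)\;, \]
a polynomial of degree $\sum_{j=0}^n(m_j-1)=m_0+\cdots+m_n-n-1$. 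Consequently $A(m)=0$, which is exactly the assertion $P_n(m)=f_0P_n+\cdots+f_nP_n$ in degree $m$, whenever $m\ge m_0+\cdots+m_n-n$.

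The main obstacle, if one insists on a self-contained argument, is justifying the passage from ``no common projective zero'' to ``regular sequence'', i.e.\ recalling that in a Cohen--Macaulay ring an ideal of the expected height is generated by a regular sequence. An alternative that sidesteps this input is an induction on $n$: after a generic linear change of coordinates one of the $f_j$ becomes monic in $x_n$ of degree $m_j$, which lets one eliminate $x_n$ and reduce to $P_{n-1}$ with the degree count shifting correctly; but invoking Cohen--Macaulayness of $P_n$ is shorter and avoids the bookkeeping of the elimination step.
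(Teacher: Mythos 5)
Your proof is correct and follows essentially the same route as the paper: both reduce the claim to the fact that the Artinian quotient $P_n/(f_0,\ldots,f_n)$ vanishes in degrees above $\sum_j(m_j-1)$. The paper simply cites Macaulay's theorem for this, whereas you derive it directly from the Hilbert series of a regular sequence (after justifying regularity via Cohen--Macaulayness), which makes the argument a bit more self-contained but does not change its substance.
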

\begin{proof}
This immediately follows from Macaulay's Theorem (see for example \cite[Section~6.2.2]{voisin2}) which tells us that the quotient of
$P_n$ by the ideal generated by $f_0,\ldots,f_n$ is a graded Gorenstein ring with socle degree $\sum(m_j-1)$,
and hence its $m$-th graded piece is zero-dimensional for all $m\ge\sum m_j-n$.
\end{proof}
To state a sufficient criterion whether $n+2$~polynomials in $P_n$ belong to the Zariski open set in Lemma~\ref{lemma:open},
we use the so called \emph{strong Lefschetz property}, see e.\,g.\ \cite{stanley}. A quotient $Q$ of $P_n$ by
homogeneous polynomials $f_0,\ldots,f_n\in P_n$ is said to have the strong Lefschetz property if there exists a linear homogeneous polynomial
$\ell\in P_n(1)$ such that the map $Q(m)\to Q(m+i)$ given by multiplication with $\ell^i$ has maximal rank for all $m,i\ge0$.
The polynomial $\ell$ is then called a \emph{strong Lefschetz element} for the system $f_0,\ldots,f_n$.
\begin{lemma}\label{lemma:gen4}
If $f_0,\ldots,f_n\in P_n$ form a complete intersection having the strong Lefschetz property
and $f_{n+1}\in P_n$ is a power of a strong Lefschetz element for $f_0,\ldots,f_n$, then
\[ P_n(m) \subset f_0P_n + \cdots + f_{n+1}P_n \]
for all $m\ge\frac12(m_0+\cdots+m_{n+1}-n-1)$ where $f_j\in P_n(m_j)$ for $j\in\{0,\ldots,n+1\}$.
\end{lemma}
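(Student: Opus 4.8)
The plan is to reduce the statement about $n+2$ polynomials to the strong Lefschetz property of the complete intersection $f_0,\dots,f_n$, which controls the cokernel $Q=P_n/(f_0,\dots,f_n)$. Write $Q=\bigoplus_{m\ge0}Q(m)$; by Macaulay's theorem $Q$ is a graded Artinian Gorenstein ring with socle degree $\sigma=\sum_{j=0}^n(m_j-1)=m_0+\cdots+m_n-n-1$, so $Q(m)=0$ for $m>\sigma$ and the pairing $Q(m)\times Q(\sigma-m)\to Q(\sigma)\cong\CC$ is perfect. The claim $P_n(m)\subset f_0P_n+\cdots+f_{n+1}P_n$ is exactly the statement that multiplication by the image $\bar f_{n+1}\in Q(m_{n+1})$ is surjective onto $Q(m)$, i.e.\ that the map $Q(m-m_{n+1})\to Q(m)$ given by multiplication with $\bar f_{n+1}$ is surjective.

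Now $f_{n+1}=\ell^s$ for a strong Lefschetz element $\ell$ and some $s\ge0$, and $m_{n+1}=s$. By the strong Lefschetz property, the map $\cdot\,\ell^{\,s}\colon Q(m-s)\to Q(m)$ has maximal rank; it is therefore surjective as soon as $\dim_\CC Q(m-s)\ge\dim_\CC Q(m)$. First I would note that the Hilbert function of an Artinian Gorenstein ring with the strong Lefschetz property is symmetric about $\sigma/2$ and unimodal: it is non-decreasing for $m\le\sigma/2$ and non-increasing for $m\ge\sigma/2$ (unimodality being an immediate consequence of the existence of a strong Lefschetz element, since $\cdot\,\ell\colon Q(m)\to Q(m+1)$ is injective for $m<\sigma/2$ and surjective for $m\ge\sigma/2$). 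Hence $\dim_\CC Q(m-s)\ge\dim_\CC Q(m)$ holds whenever $m-s$ and $m$ lie on the correct sides of the axis of symmetry, the clean sufficient condition being $(m-s)+m\ge\sigma$, equivalently $m\ge\tfrac12(\sigma+s)=\tfrac12(m_0+\cdots+m_n+m_{n+1}-n-1)$; indeed if $m\ge\sigma/2$ then $\dim Q(m)\le\dim Q(\sigma-m)\le\dim Q(m-s)$ using symmetry for the first inequality and $m-s\le\sigma-m\le\sigma/2$ together with unimodality for the second, while if $m<\sigma/2$ then necessarily $m-s<0$ and the statement is vacuous since then $\dim Q(m)\le\dim Q(\sigma/2)$ forces... here one checks the boundary: when $m<\sigma/2$ the inequality $2m\ge\sigma+s$ cannot hold for $s\ge0$, so there is nothing to prove. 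This yields exactly the bound $m\ge\tfrac12(m_0+\cdots+m_{n+1}-n-1)$ claimed.

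The only genuine input beyond linear algebra is the unimodality and symmetry of the Hilbert function of $Q$, which I would either cite from the literature on the strong Lefschetz property (e.g.\ \cite{stanley}, or \cite[Proposition~30]{harima-watanabe} in the cases we actually use) or derive on the spot: symmetry is Gorenstein duality, and unimodality follows because a strong Lefschetz element $\ell$ makes $\cdot\,\ell\colon Q(m)\to Q(m+1)$ of maximal rank, hence injective whenever $\dim Q(m)\le\dim Q(m+1)$ and surjective otherwise, and combining this with the symmetry $\dim Q(m)=\dim Q(\sigma-m)$ forces the sequence $\dim Q(0),\dim Q(1),\dots$ to rise then fall. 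The main obstacle, such as it is, lies in getting the index bookkeeping exactly right at the axis of symmetry when $\sigma$ is odd and in handling the vacuous range $m-s<0$ cleanly; once the symmetric-unimodal shape of the Hilbert function is in hand, the surjectivity of $\cdot\,\ell^{\,s}$ in the stated degree range is a one-line comparison of dimensions.
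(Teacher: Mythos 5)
Your proposal is correct and follows essentially the same route as the paper's proof: reduce the claim to surjectivity of multiplication by $\bar f_{n+1}$ on the Artinian Gorenstein quotient $Q$, invoke maximal rank from the strong Lefschetz property, and compare dimensions via the symmetry and unimodality of the Hilbert function, with the bound $2m\ge\sigma+m_{n+1}$ emerging exactly as in the paper. The only blemish is a transposed inequality near the axis of symmetry (your hypothesis gives $\sigma-m\le m-m_{n+1}$, not $m-m_{n+1}\le\sigma-m$), which is what the unimodality step actually requires; the surrounding argument makes clear this is a slip rather than a gap.
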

\begin{proof}
As in Lemma~\ref{lemma:gen3}, the quotient $Q$ of $P_n$ by $f_0,\ldots,f_n$ is a graded Gorenstein ring with socle degree
$s=\sum(m_j-1)$. Macaulay's Theorem also shows that $\dim_\CC Q(i)=\dim_\CC Q(s-i)$ for all $i\in\ZZ$.
Because of the strong Lefschetz property, $\dim_\CC Q(i)$ needs to be increasing for $i\le\frac s2$
and decreasing for $i\ge\frac s2$.
The claimed statement is equivalent to saying that the map $Q(m-m_{n+1})\to Q(m)$ given by multiplication with $f_{n+1}$ is surjective.
Since $f_{n+1}$ is a power of a strong Lefschetz element, it suffices to show $\dim_\CC Q(m-m_{n+1})\ge\dim_\CC Q(m)$.
This is clear if $m-m_{n+1}\ge\frac s2$. For $m-m_{n+1}\le\frac s2$, we have $\dim_\CC Q(m)=\dim_\CC Q(s-m)\le\dim_\CC Q(m-m_{n+1})$
because $s-m\le m-m_{n+1}$ holds due to the given bound on $m$.
\end{proof}
To make use of Lemma~\ref{lemma:gen4}, it is convenient to have a rich source of complete intersections enjoying the strong Lefschetz property.
The following important result, proved in 1980 by Stanley \cite{stanley} and independently in 1987 by Watanabe \cite{watanabe},
was the starting point for the theory of Lefschetz properties:
\begin{proposition}[Stanley--Watanabe]\label{prop:stanley}
A monomial complete intersection $x_0^{m_0},\ldots,x_n^{m_n}$ in $P_n$ with $m_0,\ldots,m_n\ge0$ has the strong Lefschetz property for all $n\ge0$.
\end{proposition}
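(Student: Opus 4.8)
The plan is to deduce the statement from the classical Hard Lefschetz theorem, which is the route taken by Stanley. If some $m_j=0$, then $x_j^{m_j}=1$ lies in the ideal, the quotient $Q=P_n/(x_0^{m_0},\dots,x_n^{m_n})$ vanishes, and there is nothing to prove; so I may assume $m_j\ge1$ for all $j$. First I would identify $Q$ with a cohomology ring. By the Künneth formula, the smooth projective variety $X=\PP^{m_0-1}\times\cdots\times\PP^{m_n-1}$ of dimension $s=\sum_j(m_j-1)$ satisfies
\[ H^{2\bullet}(X,\CC)\cong\bigotimes_{j=0}^n\CC[h_j]/\bigl(h_j^{m_j}\bigr)\cong\CC[h_0,\dots,h_n]/\bigl(h_0^{m_0},\dots,h_n^{m_n}\bigr) \]
as graded $\CC$-algebras, where each hyperplane class $h_j$ is placed in degree equal to half its cohomological degree, and where the odd cohomology of $X$ vanishes. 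Under $x_j\mapsto h_j$ this is exactly $Q$, and its socle degree is $s=\dim_\CC X$.

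Next I would pick the Lefschetz element. The line bundle $\Ocal_X(1,\dots,1)$ is ample on $X$, and its first Chern class is $\omega=h_0+\dots+h_n$, corresponding under the isomorphism above to $\ell=x_0+\dots+x_n\in P_n(1)$. By construction, cup product with $\omega^i$ from $H^{2m}(X,\CC)$ to $H^{2m+2i}(X,\CC)$ is precisely the multiplication map $Q(m)\to Q(m+i)$ induced by $\ell^i$. The Hard Lefschetz theorem guarantees that $\omega^{s-2m}\colon H^{2m}(X,\CC)\to H^{2s-2m}(X,\CC)$ is an isomorphism whenever $2m\le s$; in particular the Betti numbers of $X$ in even degrees form a symmetric, unimodal sequence. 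A routine argument then finishes the job: given any $m,i\ge0$, one factors the relevant map $Q(m)\to Q(m+i)$ through one of these Hard Lefschetz isomorphisms --- which is always possible thanks to unimodality --- and concludes that multiplication by $\ell^i$ has maximal rank. Hence $\ell$ is a strong Lefschetz element for $x_0^{m_0},\dots,x_n^{m_n}$.

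I do not expect a genuine obstacle here, as the statement is classical; the only points needing a little care are the identification of $Q$ with $H^{2\bullet}(X,\CC)$ (immediate from Künneth and the ring structure of $H^*(\PP^N)$) and the elementary deduction of ``maximal rank in all degrees and for all powers'' from the complementary-degree isomorphisms of Hard Lefschetz together with unimodality of the Betti numbers. If one prefers to avoid Hodge theory, the same element $\ell=x_0+\dots+x_n$ can be treated by Watanabe's purely algebraic method: make each $\CC[x_j]/(x_j^{m_j})$ into an irreducible $\mathfrak{sl}_2$-representation of dimension $m_j$ with lowering operator given by multiplication by $x_j$, pass to the tensor product --- where the lowering operator becomes multiplication by $\ell$ --- and decompose it into irreducibles; in each irreducible summand the nonzero graded pieces are symmetric about the centre of the grading, so the lowering operator and all its powers automatically have maximal rank between any two degrees. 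In that approach the part requiring verification is just the $\mathfrak{sl}_2$ commutation relations in one variable, after which the tensor-product step is formal.
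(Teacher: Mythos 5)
Your argument is exactly the one the paper sketches: identify $Q$ with $H^{2\bullet}(\PP^{m_0-1}\times\cdots\times\PP^{m_n-1},\CC)$, take $\ell=x_0+\cdots+x_n$ as the Kähler class, and deduce maximal rank in every degree from hard Lefschetz together with unimodality of the Betti numbers (you even correct the paper's slip of writing $\ell=x_1+\cdots+x_n$). The alternative $\mathfrak{sl}_2$ route you mention is Watanabe's proof, which the paper cites but does not use; no further comment is needed.
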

Stanley's proof goes as follows: If we interpret the graded quotient $Q=\bigoplus_{m\ge0}Q(m)$ of $P_n$
by the monomials $x_0^{m_0},\ldots,x_n^{m_n}$
as the cohomology ring in even degree $H^{2\bullet}(X,\CC)$ of the Kähler manifold $X=\PP^{m_0-1}\times\cdots\times\PP^{m_n-1}$,
the linear polynomial $\ell=x_1+\cdots+x_n$ corresponds to the cohomology class of a Kähler form on $X$
and the strong Lefschetz property for $\ell$ precisely translates into the hard Lefschetz theorem for $X$,
hence also the name of this condition.

It is known for $n\le1$ and conjectured for $n\ge2$ that actually all complete intersections in $P_n$ have the strong Lefschetz property.
For $n=2$, the following partial result proven in \cite[Proposition~30]{harima-watanabe} satisfies our needs for the proof of Proposition~\ref{prop:main}:
\begin{proposition}[Harima--Watanabe]\label{prop:lefschetz}
If $f_0,f_1,f_2\in P_2=\CC[x_0,x_1,x_2]$ form a complete intersection such that $f_0$ is a power of a linear polynomial,
then $f_0,f_1,f_2$ has the strong Lefschetz property.
\end{proposition}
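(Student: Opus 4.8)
The plan is to reduce the statement to the case that is already known, namely the strong Lefschetz property for a monomial complete intersection (Proposition~\ref{prop:stanley}), via a flat degeneration argument. After a linear change of coordinates we may assume $f_0=x_0^{m_0}$ for some $m_0\ge1$. The key observation is that the strong Lefschetz property is a Zariski open condition: for fixed degrees $m_0,m_1,m_2$, the locus of triples $(f_0,f_1,f_2)\in P_2(m_0)\oplus P_2(m_1)\oplus P_2(m_2)$ that form a complete intersection having the strong Lefschetz property (with $\ell$ allowed to vary as well) is Zariski open. Indeed, being a complete intersection is open by the usual elimination-theoretic argument, and given that, each individual map $Q(a)\to Q(a+i)$ given by multiplication by $\ell^i$ has maximal rank on a Zariski open set in the coefficients of $f_0,f_1,f_2,\ell$ by the same minors-of-a-matrix reasoning as in Lemma~\ref{lemma:open}; since $Q$ is finite-dimensional only finitely many pairs $(a,i)$ are relevant, so a finite intersection of open sets is still open, and projecting away the $\ell$-coordinate preserves openness (the image of a Zariski open set under a coordinate projection is constructible and here contains a dense open subset of its closure).

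With openness in hand, I would proceed as follows. Fix $m_0,m_1,m_2$ and set $\ell_0=x_0$, so $f_0=\ell_0^{m_0}$. The point is to show that the open locus above, intersected with the linear slice $\{f_0=x_0^{m_0}\}$, is \emph{non-empty}: then density of the complement of any proper closed subset inside this slice forces every complete intersection of the prescribed shape — in particular the arbitrary triple $(x_0^{m_0},f_1,f_2)$ from the statement — to be a limit of SLP triples, but since SLP holds on a Zariski open set and a complete intersection is an isolated point of nothing relevant, one must be more careful: openness alone does not let us pass from a generic point to an arbitrary point. So the correct strategy is a \emph{semicontinuity} argument rather than pure genericity. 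Consider the one-parameter family obtained by degenerating $(x_0^{m_0},f_1,f_2)$ to $(x_0^{m_0},x_1^{m_1},x_2^{m_2})$ through a $\CC^*$-action or an explicit linear path $t\mapsto (x_0^{m_0}, (1-t)f_1+t x_1^{m_1}, (1-t)f_2 + t x_2^{m_2})$, checking that the generic member of the path is again a complete intersection (it is, since the complete-intersection locus is open and the two endpoints both lie in it — one needs that the path is not contained in the closed complement, which follows because both endpoints are in the open set). The Hilbert function of the Artinian quotient $Q_t$ is then constant along the path (all complete intersections with the same degrees have the same Hilbert function, by Macaulay/the Koszul resolution), so the family of quotients is flat. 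Now the rank of the multiplication map $\cdot\,\ell^i\colon Q_t(a)\to Q_t(a+i)$ is lower semicontinuous in $t$ for fixed $\ell$; choosing $\ell=x_1+x_2$, which is a strong Lefschetz element at the special point $t=1$ by Proposition~\ref{prop:stanley}, the rank is maximal at $t=1$, hence maximal on a Zariski open neighbourhood of $t=1$ — but that gives SLP near the monomial point, the wrong end of the path.

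The fix, and what I expect to be the technical heart of the argument, is to run semicontinuity in the \emph{other} direction by exploiting that the hypothesis "$f_0$ is a power of a linear form" is preserved throughout: one keeps $f_0=x_0^{m_0}$ fixed and varies only $f_1,f_2$. The rank of a multiplication map is \emph{upper} semicontinuous on the source parameters only after one knows flatness; granting flatness (constant Hilbert function), the maximal-rank locus is open and — this is the real content — one shows it is also \emph{non-empty intersected with the slice $f_0=x_0^{m_0}$} by exhibiting one explicit good triple in that slice. A natural candidate is $f_1=\ell_1^{m_1}$, $f_2=\ell_2^{m_2}$ for generic linear forms $\ell_1,\ell_2$ with $x_0,\ell_1,\ell_2$ linearly independent: after a linear change of variables this triple becomes the monomial complete intersection $x_0^{m_0},x_1^{m_1},x_2^{m_2}$, which has SLP by Proposition~\ref{prop:stanley}; and SLP is invariant under linear coordinate change (the Lefschetz element transforms accordingly). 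Thus the slice $\{f_0=x_0^{m_0}\}\cap\{\text{complete intersection}\}$ meets the open SLP-locus, so a \emph{general} triple with $f_0=x_0^{m_0}$ has SLP. To upgrade "general" to "every" within the slice, one invokes that the complete-intersection condition cuts out an irreducible open subset of the slice (the slice $\{f_0=x_0^{m_0}\}\cong P_2(m_1)\oplus P_2(m_2)$ is an affine space, hence irreducible, and the complete-intersection locus is a non-empty open subset of it, hence irreducible and dense), so any non-empty open subset — in particular the SLP-locus — is dense; but a dense open subset of an irreducible variety need not be everything. This is precisely where one needs the sharper input of \cite[Proposition~30]{harima-watanabe}: the statement is \emph{not} provable by pure genericity, and the actual proof there uses a careful deformation argument specific to codimension three, degenerating an arbitrary such $(x_0^{m_0},f_1,f_2)$ to a monomial system while controlling the Hilbert function at every step and verifying that the limiting Lefschetz element remains Lefschetz — a computation I would carry out by the explicit flat-family analysis sketched above, with the monomial case of Proposition~\ref{prop:stanley} as the anchor, rather than reprove it here.
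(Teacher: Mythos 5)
The paper offers no proof of this proposition at all: it is quoted as a black box from \cite[Proposition~30]{harima-watanabe}, so there is no internal argument to compare yours against; the only question is whether your sketch actually proves the statement, and it does not. The openness/genericity part shows at best that a \emph{general} complete intersection of the form $(x_0^{m_0},f_1,f_2)$ has the strong Lefschetz property, whereas the proposition asserts it for \emph{every} such complete intersection, and, as you yourself note, a dense open subset of the slice $\{f_0=x_0^{m_0}\}$ need not be everything. The attempted repair by degeneration cannot close this gap: in a flat family with constant Hilbert function the rank of multiplication by $\ell^i$ is lower semicontinuous, so maximal rank at the special monomial fibre (Proposition~\ref{prop:stanley}) propagates only to an open neighbourhood of that fibre, i.e.\ again only to general members of the family, never back to the fixed and possibly special triple you started from. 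There is no ``upper semicontinuity in the other direction'' to invoke, with or without flatness, and keeping $f_0=x_0^{m_0}$ fixed along the path does not change this. Your closing sentence concedes exactly this point and falls back on citing \cite[Proposition~30]{harima-watanabe}, so the proposal is in effect a citation wrapped around a genericity argument that proves a strictly weaker statement.

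If you wanted a self-contained proof, you would need input of a different nature than Proposition~\ref{prop:stanley} plus openness: the result in \cite{harima-watanabe} is not obtained by specializing to, or deforming from, the monomial case, precisely because it must cover arbitrary (arbitrarily special) complete intersections with $f_0$ a power of a linear form, and no Zariski-open or semicontinuity argument can reach such members. For the purposes of this paper, simply citing \cite[Proposition~30]{harima-watanabe}, as the text does, is the correct move; your genericity observations are fine as far as they go, but they should not be presented as a proof of the proposition.
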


As a motivating example, we show how Lemmas \ref{lemma:gen3} and \ref{lemma:gen4} can be used to give a short proof
for the density of the classical Noether--Lefschetz locus for surfaces in $\PP^3$. For this, we do not need
Proposition~\ref{prop:lefschetz}, but only the earlier result stated in Proposition~\ref{prop:stanley}.
Since the setup here is a lot easier than in the case of standard quadric surface bundles,
this will also be a good preparation for the more involved arguments in Section~\ref{sect:proof}.

\begin{theorem}[Ciliberto--Harris--Miranda--Green]
For $d\ge4$, let $\Xcal\to B\subset\PP(P_3(d))$ be the universal family of smooth surfaces in $\PP^3$ of degree~$d$.
Then the Noether--Lefschetz locus
\[ \left\{b\in B \;\middle|\;\Pic(\Xcal_b)\supsetneq\ZZ\cdot\Ocal_{\PP^3}(1)|_{\Xcal_b}\right\} = \left\{ b\in B \;\middle|\; H^{1,1}_\van(\Xcal_b,\ZZ)\ne0 \right\} \;, \]
i.\,e.\ those surfaces containing curves which are no complete intersections, is dense in $B$ for the Euclidean topology.
\end{theorem}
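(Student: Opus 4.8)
The plan is to carry out the argument of Sections~\ref{sect:criterion}--\ref{sect:toric} in the (much easier) classical setting. Write $P=P_3=\CC[x_0,x_1,x_2,x_3]$, let $f\in P(d)$ be the equation of a smooth surface $\Xcal_b\subset\PP^3$, and let $R=P/J_f$ be its Jacobian ring, where $J_f$ is generated by $\partial f/\partial x_0,\ldots,\partial f/\partial x_3$. Griffiths' residue calculus \cite{griffiths69}, the prototype of \cite[Theorem~10.13]{batyrev-cox}, gives canonical isomorphisms $H^{1,1}_\van(\Xcal_b)\cong R(2d-4)$ and $H^{0,2}_\van(\Xcal_b)\cong R(3d-4)$ together with an identification $T_{B,b}\cong R(d)$ under which the infinitesimal period map $\ol\nabla_b(\ol\lambda)$, for a class $\ol\lambda\in H^{1,1}_\van(\Xcal_b)$ represented by $g\in R(2d-4)$, becomes (up to sign) the multiplication map $R(d)\to R(3d-4)$, $h\mapsto gh$; this is \cite{carlson-griffiths}. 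The first-order argument behind Proposition~\ref{prop:criterion} then applies with only cosmetic changes. In the general form mentioned right after its proof one takes the discrete fibre subbundle to have fibre $H^2_\van(\Xcal_b,\ZZ)\setminus\{0\}$; this is non-empty because $H^2_\van(\Xcal_b,\ZZ)$ has positive rank for $d\ge4$, and its $\RR^*$-orbit is dense in $H^2_\van(\Xcal_b,\RR)$ because the rational directions are dense. Hence it suffices to produce a single smooth $f\in P(d)$ together with some $g\in P(2d-4)$ for which the composite $P(d)\to R(3d-4)$, $h\mapsto\ol{gh}$, is surjective; equivalently, for which the ideal $(\partial f/\partial x_0,\ldots,\partial f/\partial x_3,g)$ contains all of $P(3d-4)$. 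This is the precise analogue of Proposition~\ref{prop:main}.

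I would then take $f=x_0^d+x_1^d+x_2^d+x_3^d$, the Fermat surface; it is smooth, and its partial derivatives are, up to the nonzero scalar $d$, the monomials $x_0^{d-1},x_1^{d-1},x_2^{d-1},x_3^{d-1}$. These form a monomial complete intersection in $P_3$, hence have the strong Lefschetz property by Proposition~\ref{prop:stanley}; fix a strong Lefschetz element $\ell\in P(1)$ for this system. Since $2d-4\ge1$ for $d\ge4$, the polynomial $g=\ell^{2d-4}$ is a power of a strong Lefschetz element, so Lemma~\ref{lemma:gen4} with $n=3$, $m_0=\cdots=m_3=d-1$ and $m_4=2d-4$ yields
\[ P(m)\subset x_0^{d-1}P+\cdots+x_3^{d-1}P+gP \qquad\text{for all}\quad m\ge\tfrac12\bigl(4(d-1)+(2d-4)-4\bigr)=3d-6 \;. \]
As $3d-4\ge3d-6$, this holds in particular for $m=3d-4$, which is exactly the required ideal membership. (The class $g$ is then automatically non-zero in $R(2d-4)$, since otherwise the multiplication map would vanish although $R(3d-4)\ne0$ when $d\ge4$.) Hence $\ol\nabla_b(\ol\lambda)$ is surjective, so the locus $\{b\in B\mid H^{1,1}_\van(\Xcal_b,\ZZ)\ne0\}$ --- which equals the Noether--Lefschetz locus by the Lefschetz $(1,1)$-theorem --- is dense.

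I expect no real obstacle in this warm-up; its role is to exhibit in isolation the mechanism that does the work later. The decisive point is the bound in Lemma~\ref{lemma:gen4}: using only that the partials of a smooth $f$ form a complete intersection, Lemma~\ref{lemma:gen3} would give ideal membership merely for $m\ge4(d-1)-3=4d-7$, and $4d-7\le3d-4$ already fails at $d=4$; adjoining a fifth generator equal to a power of a Lefschetz element is precisely what lowers the threshold to $3d-6\le3d-4$. In Section~\ref{sect:proof} the same idea has to be used, but there the partials of a standard quadric surface bundle live in the bigraded ring $S$ and, read in the $x$-variables, fail to be a complete intersection; the main difficulty will therefore be to slice the problem into univariate pieces on which Lemmas~\ref{lemma:gen3} and~\ref{lemma:gen4} and Proposition~\ref{prop:lefschetz} can be applied.
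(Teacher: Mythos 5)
Your proposal is correct and follows essentially the same route as the paper: the same reduction via Griffiths/Carlson--Griffiths and the Green--Voisin criterion to an ideal-membership statement in degree $3d-4$, and the same choice of the Fermat surface with $g$ a power of a strong Lefschetz element, using Proposition~\ref{prop:stanley} and Lemma~\ref{lemma:gen4} with the identical bound $3d-6\le 3d-4$. The small additions (making the fibre bundle $H^2_\van(\Xcal_b,\ZZ)\setminus\{0\}$ explicit, checking $g\ne0$ in $R(2d-4)$) are harmless elaborations of details the paper leaves implicit.
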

\begin{proof}
By Green's and Voisin's infinitesimal density criterion, it suffices to show that there exists a point $b\in B$ and a class $\ol\lambda\in H^{1,1}_\van(\Xcal_b)$
such that
\[ \ol\nabla_b(\ol\lambda)\colon T_{B,b} \to H^{0,2}_\van(\Xcal_b) \]
is surjective.
For a surface $X\subset\PP^3$ defined by a polynomial $f\in P_3(d)$, Griffiths \cite{griffiths69} has shown that
\[ H^{0,2}_\van(X) \cong R(3d-4) \quad\text{and}\quad H^{1,1}_\van(X) \cong R(2d-4) \]
where $R$ denotes the Jacobian ring of $f$, i.\,e.\ the quotient of $P_3$ by the partial derivatives of $f$.
If we identify $T_{B,b}\cong(P_3/fP_3)(d)$ where $f\in P_3(d)$ is the defining equation of $\Xcal_b$ for some $b\in B$,
Carlson and Griffiths \cite{carlson-griffiths} proved that the infinitesimal period map
\[ \ol\nabla_b\colon T_{B,b} \otimes H^{1,1}_\van(\Xcal_b) \to H^{0,2}_\van(\Xcal_b) \]
is given, up to a sign, as the multiplication map
\[ (P_3/fP_3)(d) \otimes R(2d-4) \to R(3d-4) \;. \]
Therefore, it suffices to find polynomials $f\in P_3(d)$ and $g\in P_3(2d-4)$
such that the surface $\{f=0\}\subset\PP^3$ is smooth and the ideal generated by $g$ and the partial derivatives of $f$
contains the whole of $P_3(3d-4)$.

One can achieve this with the smooth Fermat surface defined by \[ f=x_0^d+x_1^d+x_2^d+x_3^d \;, \]
which was also used in \cite[Section~3]{kim}.
Since the complete intersection consisting of the partial derivatives of $f$ has the strong Lefschetz property by Proposition~\ref{prop:stanley},
we can take $g$ to be a power of a corresponding strong Lefschetz element and obtain via Lemma~\ref{lemma:gen4}
\[ P_3(m) \subset x_0^{d-1}P_3+x_1^{d-1}P_3+x_2^{d-1}P_3+x_3^{d-1}P_3+gP_3 \]
for all $m\ge\frac12(4(d-1)+2d-4-4)=3d-6$.
Since $3d-4\ge3d-6$, this finishes the proof.
\end{proof}

\section{Proof of Proposition~\ref{prop:main}}\label{sect:proof}

Without loss of generality, let $r_0\le r_1\le r_2\le r_3$. Let us recall from Section~\ref{sect:toric}
that $d_j=2r_j+d$ for $j\in\{0,1,2,3\}$ and $t=4d-3+\sum r_j$.
By Lemma~\ref{lemma:open}, the property stated in Proposition~\ref{prop:main} is Zariski open on $f$ and $g$.
Hence, it suffices to show the existence of polynomials $f\in S(d,2)$ and $g\in S(t-d,2)$ such that the homogeneous ideal $I\subset S$ generated by
\[ \frac{\partial f}{\partial x_0}\;,\quad\frac{\partial f}{\partial x_1}\;,\quad\frac{\partial f}{\partial x_2}\;,\quad
\frac{\partial f}{\partial y_0}\;,\quad\frac{\partial f}{\partial y_1}\;,\quad\frac{\partial f}{\partial y_2}\;,\quad\frac{\partial f}{\partial y_3}\;,\quad g \]
contains all polynomials in $S(t,4)$.
Let
\[ f=f_0y_0^2+f_1y_1^2+f_2y_2^2+f_3y_3^2\in S(d,2) \]
where $f_j\in S(d_j,0)$ are general for $j\in\{0,1,2,3\}$. Further let
\[ g=g_{11}y_1^2+g_{33}y_3^2+\sum_{0\le i<j\le3}g_{ij}y_iy_j\in S(t-d,2) \]
where $g_{ij}\in S(t-d+r_i+r_j,0)$ are general for $i,j\in\{0,1,2,3\}$.
Instead of proving directly that $S(t,4)\subset I$, we will consider the homogeneous ideal
\[ J = \bigoplus_{m,n\in\ZZ} \{ r\in S(m,n)\mid rS\cap S(t,4)\subset I \} \;, \]
and aim to show $J=S$. One can think of $J$ as all relations which hold if a polynomial of bidegree $(t,4)$ is considered modulo~$I$.
Since $I\subset J$, the following congruences hold:
\begin{align}
f_jy_j &\equiv 0\pmod J\;,\quad j\in\{0,1,2,3\}\;, \label{eq:fy} \\
\frac{\partial f_0}{\partial x_k}y_0^2+\frac{\partial f_1}{\partial x_k}y_1^2+\frac{\partial f_2}{\partial x_k}y_2^2+\frac{\partial f_3}{\partial x_k}y_3^2 &\equiv 0\pmod J\;,\quad k\in\{0,1,2\}\;, \label{eq:dfy2} \\
g_{11}y_1^2+g_{33}y_3^2+\sum_{0\le i<j\le3}g_{ij}y_iy_j &\equiv 0\pmod J \;. \label{eq:g}
\end{align}
It suffices to show $S(t,4)\subset J$. For this it is enough to prove the following four claims for all permutations $\sigma$ of $\{0,1,2,3\}$:
\[ y_{\sigma(0)}y_{\sigma(1)}y_{\sigma(2)}\in J \;,\quad y_{\sigma(0)}^3y_{\sigma(1)}\in J \;,\quad y_{\sigma(0)}^2y_{\sigma(1)}^2\in J \;,\quad y_{\sigma(0)}^4\in J \;. \]
The proof of each of these claims will constitute one of the four steps \ref{step:stu}, \ref{step:s3t}, \ref{step:s2t2}, and \ref{step:s4} below.
In each step, it suffices to show that any monomial of bidegree $(t,4)$ containing the specified variables $y_j$ can be reduced to $0$ modulo~$J$ using the congruences \eqref{eq:fy}, \eqref{eq:dfy2}, \eqref{eq:g}, and the previous steps.
In fact, the assertion $r_0\le r_1\le r_2\le r_3$ and the congruence \eqref{eq:g} will not be used in the first two steps, so we are allowed to restrict ourselves to the case $\sigma=\id$ in these two steps.

\subsection{First step}\label{step:stu}

We have $y_{\sigma(0)}y_{\sigma(1)}y_{\sigma(2)}\in J$ for all permutations $\sigma$ of $\{0,1,2,3\}$.
\begin{proof}
Without loss of generality, let $\sigma=\id$.
We first note that
\begin{equation}
S(d_0+d_1+d_2-2,0)\subset f_0S+f_1S+f_2S \;. \label{eq:ci}
\end{equation}
This follows from Lemmas \ref{lemma:open} and \ref{lemma:gen3} because there are complete intersections $f_0,f_1,f_2$ in \[ P_2=\CC[x_0,x_1,x_2]=\bigoplus_{m\ge0}S(m,0) \;. \]
Now let us take a monomial $hy_0y_1y_2y_j\in S(t,4)$ where $j\in\{0,1,2,3\}$ and $h\in S(t+r_0+r_1+r_2+r_j,0)$.
We may assume that $r_j>0$ or $d>0$, since for $d_j=2r_j+d=0$ we have $y_j\equiv0\pmod J$ by \eqref{eq:fy} and hence $hy_0y_1y_2y_j\equiv0\pmod J$.
In view of \eqref{eq:fy} and \eqref{eq:ci}, it suffices to show that
\[ t+r_0+r_1+r_2+r_j \ge d_0+d_1+d_2-2 \;. \]
This is equivalent to
\[ 2r_0+2r_1+2r_2+r_3+r_j+4d-3 \ge 2r_0+2r_1+2r_2+3d-2 \]
or just $r_3+r_j+d \ge 1$,
which is true because $r_j>0$ or $d>0$.
\end{proof}

\subsection{Second step}\label{step:s3t}

We have $y_{\sigma(0)}^3y_{\sigma(1)}\in J$ for all permutations $\sigma$ of $\{0,1,2,3\}$.
\begin{proof}
Without loss of generality, let $\sigma=\id$.
Multiplying \eqref{eq:dfy2} with $y_0y_1$ and using step~\ref{step:stu} yields
\begin{equation}
\left(\frac{\partial f_0}{\partial x_k}y_0^2+\frac{\partial f_1}{\partial x_k}y_1^2\right)y_0y_1\equiv0 \pmod J\;,\quad k\in\{0,1,2\}\;. \label{eq:s3t+st3}
\end{equation}
We introduce the new polynomial ring $T=\CC[x_0,x_1,x_2;z_0,z_1]$ with the bigrading
\[ \deg x_k=(1,0)\;,\quad\deg z_j=(-d_j,1) \]
for $k\in\{0,1,2\}$ and $j\in\{0,1\}$.
\begin{claim}
We have
\begin{equation}
T(d_0+d_1-3,1) \subset f_0T+f_1T+\left(\frac{\partial f_0}{\partial x_0}z_0+\frac{\partial f_1}{\partial x_0}z_1\right)T+\left(\frac{\partial f_0}{\partial x_1}z_0+\frac{\partial f_1}{\partial x_1}z_1\right)T \;. \label{eq:claimT}
\end{equation}
\end{claim}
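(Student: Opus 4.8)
The plan is to rephrase the Claim as the surjectivity of an explicit multiplication map and then evaluate that map pointwise on the finite scheme $\{f_0=f_1=0\}$. We may assume $d_0\ge1$ and $d_1\ge1$: if some $d_j=0$, then $f_j$ is a nonzero constant and the right-hand module in~\eqref{eq:claimT} is all of $T$. Write a general element of $T(d_0+d_1-3,1)$ as $az_0+bz_1$ with $a\in P_2(2d_0+d_1-3)$ and $b\in P_2(d_0+2d_1-3)$. Every $P_2$-multiple of $f_0z_0$, $f_0z_1$, $f_1z_0$, $f_1z_1$ lies in the right-hand module, while a $P_2$-multiple $\bigl(\tfrac{\partial f_0}{\partial x_k}z_0+\tfrac{\partial f_1}{\partial x_k}z_1\bigr)h$ of either remaining generator changes the $z_0$-coefficient by $\tfrac{\partial f_0}{\partial x_k}h$ and the $z_1$-coefficient by $\tfrac{\partial f_1}{\partial x_k}h$ at the same time. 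Hence $az_0+bz_1$ lies in the right-hand module if and only if there are $h,h'\in P_2(d_0+d_1-2)$ with
\[ a\equiv\tfrac{\partial f_0}{\partial x_0}h+\tfrac{\partial f_0}{\partial x_1}h' \quad\text{and}\quad b\equiv\tfrac{\partial f_1}{\partial x_0}h+\tfrac{\partial f_1}{\partial x_1}h' \pmod{(f_0,f_1)}\,. \]
Setting $\ol A=P_2/(f_0,f_1)$, the Claim is therefore equivalent to the surjectivity of
\[ \Phi\colon\ol A(d_0+d_1-2)^{\oplus2}\to\ol A(2d_0+d_1-3)\oplus\ol A(d_0+2d_1-3)\,,\quad(h,h')\mapsto\Bigl(\tfrac{\partial f_0}{\partial x_0}h+\tfrac{\partial f_0}{\partial x_1}h'\,,\ \tfrac{\partial f_1}{\partial x_0}h+\tfrac{\partial f_1}{\partial x_1}h'\Bigr)\,. \]

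To analyse $\Phi$ for general $f_0,f_1$, note that $f_0,f_1$ then form a regular sequence in $P_2$, so $\ol A$ is Cohen--Macaulay of dimension $1$ and $Z:=\{f_0=f_1=0\}\subset\PP^2$ consists of $d_0d_1$ distinct points. A general linear form $\ell$ is a nonzerodivisor on $\ol A$ with $\ell(P)\ne0$ for every $P\in Z$, and by Macaulay's Theorem the Artinian ring $\ol A/\ell\ol A\cong\CC[u,v]/(\ol f_0,\ol f_1)$ is Gorenstein of socle degree $d_0+d_1-2$. Consequently multiplication by $\ell$ is injective on $\ol A$ and bijective $\ol A(m)\to\ol A(m+1)$ for all $m\ge d_0+d_1-2$; in particular $\dim_\CC\ol A(m)=d_0d_1$ for such $m$, and the natural restriction map $\ol A(m)\to H^0(Z,\Ocal_Z)\cong\CC^{d_0d_1}$ is an isomorphism (after trivialising $\Ocal_Z(1)$ by $\ell$). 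All three graded pieces occurring in $\Phi$ have $m\ge d_0+d_1-2$, so after these identifications $\Phi$ becomes a block-diagonal endomorphism of $(\CC^{d_0d_1})^{\oplus2}$ whose block at a point $P\in Z$ has determinant a nonzero scalar multiple of $W(P)$, where $W:=\tfrac{\partial f_0}{\partial x_0}\tfrac{\partial f_1}{\partial x_1}-\tfrac{\partial f_0}{\partial x_1}\tfrac{\partial f_1}{\partial x_0}$. Since the source and target of $\Phi$ both have dimension $2d_0d_1$, surjectivity of $\Phi$ is equivalent to $W$ not vanishing at any point of $Z$.

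It then remains to check that $\{f_0=f_1=W=0\}=\emptyset$ in $\PP^2$ for general $f_0,f_1$; since the condition in the Claim is Zariski open in $(f_0,f_1)$ by Lemma~\ref{lemma:open}, this is enough. I would argue with the incidence variety $\Gamma=\{(f_0,f_1,P)\colon f_0(P)=f_1(P)=W(P)=0\}$ inside the product of the two parameter spaces with $\PP^2$, projecting to $\PP^2$. For $P\notin\{x_2=0\}$, Euler's relation shows that, given $f_0(P)=f_1(P)=0$, the pairs $\bigl(\tfrac{\partial f_0}{\partial x_0}(P),\tfrac{\partial f_0}{\partial x_1}(P)\bigr)$ and $\bigl(\tfrac{\partial f_1}{\partial x_0}(P),\tfrac{\partial f_1}{\partial x_1}(P)\bigr)$ can still be prescribed arbitrarily, so $W(P)=0$ imposes a genuine extra condition and this part of $\Gamma$ has positive codimension over the parameter space; for $P\in\{x_2=0\}$ the vanishing $W(P)=0$ is automatic, but $\{f_0=f_1=0\}\cap\{x_2=0\}=\emptyset$ for general $f_0,f_1$ (general binary forms of positive degree are coprime), so this part of $\Gamma$ also has positive codimension. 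Hence $\Gamma$ does not dominate the parameter space, which yields the emptiness and proves the Claim. The main obstacle is precisely this last genericity statement, together with the bookkeeping needed to move between $\ol A$ in its various degrees and the quotient $\CC^{d_0d_1}$; everything else is formal. One may also avoid $W$ altogether by observing that for general $f_0,f_1$ the curves $\{f_0=0\}$ and $\{f_1=0\}$ meet transversally in $d_0d_1$ points avoiding $\{x_2=0\}$, and then invoking the Euler identity $x_2\bigl(\tfrac{\partial f_0}{\partial x_2}z_0+\tfrac{\partial f_1}{\partial x_2}z_1\bigr)=d_0f_0z_0+d_1f_1z_1-x_0\bigl(\tfrac{\partial f_0}{\partial x_0}z_0+\tfrac{\partial f_1}{\partial x_0}z_1\bigr)-x_1\bigl(\tfrac{\partial f_0}{\partial x_1}z_0+\tfrac{\partial f_1}{\partial x_1}z_1\bigr)$ to bring the third partial derivative back into play at each point of $Z$.
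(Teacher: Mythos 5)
Your argument is correct, but it takes a genuinely different route from the paper's. You keep $f_0,f_1$ general, identify the relevant graded pieces of $P_2/(f_0,f_1)$ with functions on the $d_0d_1$ reduced points of $Z=\{f_0=f_1=0\}$ (all three degrees involved are at least $d_0+d_1-2$, exactly the threshold where the Hilbert function stabilises at $d_0d_1$, so the identification is legitimate), and reduce the claim to the nonvanishing on $Z$ of the minor $W=\tfrac{\partial f_0}{\partial x_0}\tfrac{\partial f_1}{\partial x_1}-\tfrac{\partial f_0}{\partial x_1}\tfrac{\partial f_1}{\partial x_0}$. By the Euler relation, at a transversal point $P\in Z$ this minor is a nonzero multiple of $x_2(P)$, so it suffices that the intersection be transversal and avoid $\{x_2=0\}$ --- both generic conditions. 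The paper instead specialises to $f_0=(x_0+x_1)^{d_0}+x_2^{d_0}$ and $f_1=(x_0-x_1)^{d_1}+x_2^{d_1}$, takes the sum and difference of the two derivative generators to decouple the $z_0$- and $z_1$-components into $2d_0(x_0+x_1)^{d_0-1}z_0$ and $2d_1(x_0-x_1)^{d_1-1}z_1$, applies Lemma~\ref{lemma:gen3} to the complete intersections $(x_0\pm x_1)^{d_j-1},f_0,f_1$, and concludes with Lemma~\ref{lemma:open}. The paper's route is shorter and reuses only machinery already in place; yours is more geometric and explains why the statement holds for a truly general pair: the two generators span the fibre $\langle z_0,z_1\rangle$ at each point of $Z$ exactly when the $(x_0,x_1)$-minor of the Jacobian is nonzero there. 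Your closing incidence-variety count is the least polished step, but it is sound --- and in fact redundant once one observes, as you do at the very end via Euler's identity, that transversality off $\{x_2=0\}$ already forces $W(P)\ne0$.
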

\begin{proof}[Proof of the claim]
The claim is true if $d_0=0$ or $d_1=0$ because $f_0$ or $f_1$ is a unit then. If $d_0,d_1>0$, setting $f_0=(x_0+x_1)^{d_0}+x_2^{d_0}$ and $f_1=(x_0-x_1)^{d_1}+x_2^{d_1}$ yields
\[ \left(\frac{\partial f_0}{\partial x_0}z_0+\frac{\partial f_1}{\partial x_0}z_1\right)+\left(\frac{\partial f_0}{\partial x_1}z_0+\frac{\partial f_1}{\partial x_1}z_1\right)=2d_0(x_0+x_1)^{d_0-1}z_0 \;. \]
Since $(x_0+x_1)^{d_0-1},f_0,f_1$ form a complete intersection in \[ P_2=\CC[x_0,x_1,x_2]=\bigoplus_{m\ge0}T(m,0) \;, \] Lemma~\ref{lemma:gen3} implies that the right-hand side of \eqref{eq:claimT} contains all polynomials in $T(d_0+d_1-3,1)$ of type $hz_0$ where $h\in T(2d_0+d_1-3,0)$.
Similarly,
\[ \left(\frac{\partial f_0}{\partial x_0}z_0+\frac{\partial f_1}{\partial x_0}z_1\right)-\left(\frac{\partial f_0}{\partial x_1}z_0+\frac{\partial f_1}{\partial x_1}z_1\right)=2d_1(x_0-x_1)^{d_1-1}z_1 \]
and $(x_0-x_1)^{d_1-1},f_0,f_1$ are again a complete intersection, so all polynomials in $T(d_0+d_1-3,1)$ divisible by $z_1$ are contained in the right-hand side of \eqref{eq:claimT} as well.
Hence, the claim follows from Lemma~\ref{lemma:open} applied the polynomial ring $T$, since the coefficients of the four polynomials which are supposed to generate $T(d_0+d_1-3,1)$ depend linearly and thus Zariski continuously on those of the general polynomials $f_0$ and $f_1$.
\end{proof}
Now let us take a monomial $hy_0^3y_1\in S(t,4)$ where $h\in S(t+3r_0+r_1,0)$. We have
\[ t+3r_0+r_1=4r_0+2r_1+r_2+r_3+4d-3 \ge 4r_0+2r_1+3d-3 = 2d_0+d_1-3 \;. \]
Therefore, as a consequence of \eqref{eq:claimT} we obtain
\[ hz_0 = h_0f_0 + h_1f_1 + h_2\left(\frac{\partial f_0}{\partial x_0}z_0+\frac{\partial f_1}{\partial x_0}z_1\right) + h_3\left(\frac{\partial f_0}{\partial x_1}z_0+\frac{\partial f_1}{\partial x_1}z_1\right) \]
for certain homogeneous polynomials $h_0,h_1,h_2,h_3\in T$. Substituting $z_j$ by $y_j^2$ for $j\in\{0,1\}$ and multiplying with $y_0y_1$, we get by \eqref{eq:fy} and \eqref{eq:s3t+st3}
\begin{align*}
hy_0^3y_1 &= \tilde h_0f_0y_0y_1 + \tilde h_1f_1y_0y_1 + h_2\left(\frac{\partial f_0}{\partial x_0}y_0^2+\frac{\partial f_1}{\partial x_0}y_1^2\right)y_0y_1 + h_3\left(\frac{\partial f_0}{\partial x_0}y_0^2+\frac{\partial f_1}{\partial x_0}y_1^2\right)y_0y_1 \\
&\equiv \tilde h_0y_1\cdot0 + \tilde h_1y_0\cdot0 + h_2\cdot0 + h_3\cdot0 \equiv 0 \pmod J
\end{align*}
where $\tilde h_0$ and $\tilde h_1$ denote the results of the substitution inside $h_0$ and $h_1$.
\end{proof}

\subsection{Third step}\label{step:s2t2}

We have $y_{\sigma(0)}^2y_{\sigma(1)}^2\in J$ for all permutations $\sigma$ of $\{0,1,2,3\}$.
\begin{proof}
Multiplying \eqref{eq:g} with $y_iy_j$ for $0\le i<j\le 3$ and using the previous steps,
we obtain
\begin{equation}
g_{ij}y_i^2y_j^2\equiv0\pmod J\;. \label{eq:gs2t2}
\end{equation}
For the following definition, we assume $d_0>0$ at first. For $j\in\{0,1,2,3\}$, let $\hat A_j$ be the $(3\times3)$-matrix where we leave out the $j$-th column (counted from $0$) of the matrix
\[ \left(\frac{\partial f_j}{\partial x_k}\right)_{\substack{k\in\{0,1,2\}\\\mathclap{j\in\{0,1,2,3\}}}} \;. \]
A straightforward calculation shows that \eqref{eq:dfy2} implies
\begin{equation}
\left(\det\hat A_j\right)y_i^2\equiv\eps_{ij}\left(\det\hat A_i\right)y_j^2\pmod J\;,\quad i,j\in\{0,1,2,3\} \label{eq:det}
\end{equation}
where $\det\hat A_j\in S(d_0+d_1+d_2+d_3-d_j-3,0)$ for $j\in\{0,1,2,3\}$ and $\eps_{ij}\in\{\pm1\}$ is a sign depending on $i,j\in\{0,1,2,3\}$.

For $d_0=0$, both sides of \eqref{eq:det} would be zero since $\frac{\partial f_0}{\partial x_k}=0$ for $k\in\{0,1,2\}$.
Therefore, in the case $d_0=0$ we define the matrix $\hat A_j$ for $j\in\{1,2,3\}$ to be the $(2\times2)$-matrix where one leaves out the $j$-th column (counted from $1$) of the matrix
\[ \left(\frac{\partial f_j}{\partial x_k}\right)_{\substack{k\in\{0,1\}\\\mathclap{j\in\{1,2,3\}}}} \;. \]
Because \eqref{eq:fy} implies $y_0\equiv0\pmod J$ in this case, one can still conclude from \eqref{eq:dfy2} that
\begin{equation}
\left(\det\hat A_j\right)y_i^2\equiv\eps_{ij}\left(\det\hat A_i\right)y_j^2\pmod J\;,\quad i,j\in\{1,2,3\} \label{eq:det0}
\end{equation}
where $\det\hat A_j\in S(d_1+d_2+d_3-d_j-2,0)$ for $j\in\{1,2,3\}$ and $\eps_{ij}\in\{\pm1\}$ may be different for $i,j\in\{1,2,3\}$.

Let us first suppose that $\{\sigma(0),\sigma(1)\}=\{1,2\}$.
Multiplying \eqref{eq:g} with $y_2^2$ and using steps \ref{step:stu} and \ref{step:s3t} yields
\begin{equation}
g_{11}y_1^2y_2^2+g_{33}y_2^2y_3^2\equiv0\pmod J\;. \label{eq:gs2t2+gt2u2}
\end{equation}
Let us consider the polynomial ring $U=\CC[x_0,x_1,x_2;z_1,z_3]$ with the bigrading
\[ \deg x_k=(1,0)\;,\quad\deg z_j=(-d_j,1) \]
for $k\in\{0,1,2\}$ and $j\in\{1,3\}$.
We claim that
\begin{equation}
U(t-d+2r_2,1) \subset K \;, \label{eq:claimU}
\end{equation}
where $K$ denotes the ideal in $U$ generated by
\[ f_1z_1\;,\quad f_2\;,\quad f_3z_3\;,\quad g_{12}z_1\;,\quad g_{23}z_3\;,\quad g_{11}z_1+g_{33}z_3\;,\quad\left(\det\hat A_3\right)z_1-\eps_{13}\left(\det\hat A_1\right)z_3 \;. \]
Since the coefficients of these seven polynomials in $U$ depend algebraically on those of $f_0,f_1,f_2,f_3,g_{11},g_{12},g_{23},g_{33}$,
Lemma~\ref{lemma:open} with $A=U$ shows that it is enough to provide a special choice for the general polynomials
$f_j,g_{ij}\in\CC[x_0,x_1,x_2]$ making \eqref{eq:claimU} true.
\begin{claim}
This can be achieved in the following way,
where $\mu,\nu\in U(1,0)$ denote suitable strong Lefschetz elements of complete intersections that will be specified later:
\begin{align*}
f_0 &= x_0^{d_0} & g_{11} &= x_2^{t-d+2r_1} \\
f_1 &= x_0^{d_1} & g_{12} &= \nu^{t-d+r_1+r_2} \\
f_2 &= x_0^{d_2}+x_1^{d_2} & g_{23} &= \mu^{t-d+r_2+r_3} \\
f_3 &= x_0^{d_3}+x_2^{d_3} & g_{33} &= \mu^{t-d+2r_3}
\end{align*}
\end{claim}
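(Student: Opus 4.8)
The plan is to verify the claim by exhibiting the suitable linear forms $\mu,\nu$ and then reducing \eqref{eq:claimU} to two applications of Lemma~\ref{lemma:gen4} (together with Propositions~\ref{prop:stanley} and~\ref{prop:lefschetz}), one for the monomials of $U(t-d+2r_2,1)$ divisible by $z_1$ and one for those divisible by $z_3$. With the stated choice, the computation $\partial f_j/\partial x_k$ only involves the monomial part, so $\det\hat A_3 = \det\hat A_3(f_0,f_1,f_2)$ is (a scalar times) a monomial in $x_0,x_1,x_2$; explicitly, since $f_0,f_1$ are powers of $x_0$ we get that $\det\hat A_3$ is divisible by a high power of $x_0$, and similarly $\det\hat A_1$ is divisible by a high power of $x_0$. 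The seven generators of $K$ restricted to the $z_1$-part are $f_1$, $f_2$, $g_{12}=\nu^{t-d+r_1+r_2}$, $g_{11}=x_2^{t-d+2r_1}$, and the coefficient of $z_1$ coming from the last two generators; one wants these to contain a complete intersection in $P_2$ to which Lemma~\ref{lemma:gen3} or \ref{lemma:gen4} applies in the required degree. The same is done on the $z_3$-side with $f_3$, $f_2$, $g_{23}=\mu^{t-d+r_2+r_3}$, $g_{33}=\mu^{t-d+2r_3}$.

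First I would substitute the monomial choices into each generator and record the bidegrees, checking they match those prescribed after \eqref{eq:claimU}. Next, using $f_2=x_0^{d_2}+x_1^{d_2}$ and $f_3=x_0^{d_3}+x_2^{d_3}$, I would identify a complete intersection of three forms inside the $z_1$-component — the natural candidates are $f_2$, $g_{11}=x_2^{t-d+2r_1}$ and either $f_1=x_0^{d_1}$ or a suitable linear combination built from $\det\hat A_1,\det\hat A_3$ — and check the no-common-zero condition in $\PP^2$; then choose $\nu$ to be a strong Lefschetz element for that complete intersection, invoking Proposition~\ref{prop:lefschetz} when one of the three forms is a power of a linear form (which will be the case, since $g_{11}$ and/or $f_1$ are pure powers of a single variable). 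Lemma~\ref{lemma:gen4} then yields that $f_1 U + f_2 U + \nu^{\bullet}U$ plus the $z_1$-parts already contain all $z_1$-divisible polynomials of degree $t-d+2r_2$ provided $t-d+2r_2 \ge \frac12(\text{sum of the degrees} - 3)$, an inequality that unwinds (using $t=4d-3+\sum r_j$ and $d_j=2r_j+d$) to something like $r_1+r_3+d\ge\text{const}$, which should hold in all nontrivial cases. I would do the symmetric computation on the $z_3$-side with $\mu$, using $f_2,f_3,g_{33}$ and Proposition~\ref{prop:lefschetz} again.

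The main obstacle will be two-fold. First, one must choose $\mu$ and $\nu$ so that simultaneously (a) the relevant triples of forms are complete intersections in $P_2$, (b) $\mu$ (resp.\ $\nu$) is genuinely a strong Lefschetz element for the chosen complete intersection so that Lemma~\ref{lemma:gen4} applies, and (c) the leftover generators involving $\det\hat A_1,\det\hat A_3$ and $g_{11}z_1+g_{33}z_3$ actually let one pass from ``$z_1$-divisible and $z_3$-divisible polynomials'' to all of $U(t-d+2r_2,1)$ — i.e.\ the relation $g_{11}z_1+g_{33}z_3\equiv 0$ together with $g_{12}z_1,g_{23}z_3\in K$ must glue the two halves. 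Since a general polynomial of bidegree $(m,1)$ in $U$ is $h_1z_1+h_3z_3$, and we will have shown $h_1z_1\in K$ whenever $\deg h_1$ is large enough and $h_3z_3\in K$ similarly, the gluing is automatic once both degree bounds are met — so the real work is checking the two numerical inequalities and the complete-intersection conditions for the monomial choices. Second, one must handle the degenerate sub-cases ($d_1=0$, making $f_1$ a unit, or $d_2=0$, etc.) separately, exactly as in the proof of the earlier claim \eqref{eq:claimT}; in those cases the corresponding generator becomes a unit and the statement simplifies. I expect the inequality bookkeeping to be the only genuinely delicate point, the rest being applications of the lemmas already established.
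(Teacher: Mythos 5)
There is a genuine gap, and it sits exactly where the specific shape of the claimed choice matters. You treat the $z_1$- and $z_3$-coefficients of the generators of $K$ as if they were separately available, but two of the seven generators are mixed: $g_{11}z_1+g_{33}z_3$ and $\left(\det\hat A_3\right)z_1-\eps_{13}\left(\det\hat A_1\right)z_3$. From $g_{11}z_1+g_{33}z_3\in K$ you cannot simply use $g_{11}$ on the $z_1$-side; the reason it is available here is that $g_{23}$ and $g_{33}$ are powers of the \emph{same} linear form $\mu$, so that $g_{33}z_3=\mu^{r_3-r_2}g_{23}z_3\in K$ (this uses $r_2\le r_3$ from the ordering fixed at the start of Section~\ref{sect:proof}), whence $g_{11}z_1\in K$. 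Your proposal never uses, or even explains, this feature of the claimed choice; without it the $z_1$-side has only the three generators $f_1,f_2,g_{12}$, and Lemma~\ref{lemma:gen3} then demands degree at least $d_1+d_2+(t-d+r_1+r_2)-2$, which exceeds the needed degree $t-d+2r_2+d_1$ unless $r_1+r_2+d\le2$. With $g_{11}$ made available, the correct $z_1$-side argument is Lemma~\ref{lemma:gen4} for the complete intersection $f_1,f_2,g_{11}$ (Proposition~\ref{prop:lefschetz} applies since $f_1=x_0^{d_1}$) with $\nu$ its Lefschetz element and $g_{12}=\nu^{t-d+r_1+r_2}$ the Lefschetz power; the numerics reduce to $r_2\ge r_1-3$.

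The $z_3$-side of your plan is structurally wrong. You propose the complete intersection $f_2,f_3,g_{33}$ with $g_{23}$ as the Lefschetz power, but $g_{23}=\mu^{t-d+r_2+r_3}$ and $g_{33}=\mu^{t-d+2r_3}$ are powers of the same $\mu$, so together with $f_2,f_3$ they generate only the ideal $(f_2,f_3,\mu^{t-d+r_2+r_3})$; Lemma~\ref{lemma:gen3} then needs degree at least $d_2+d_3+(t-d+r_2+r_3)-2$, which fails against the required $t-d+2r_2+d_3$ unless $r_2+r_3+d\le2$, and Proposition~\ref{prop:lefschetz} does not allow you to take $\mu$ itself as a strong Lefschetz element of a complete intersection containing a power of $\mu$ — it only asserts that \emph{some} linear form works. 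The missing ingredient is the determinant generator, and it can only be exploited \emph{after} the $z_1$-side is finished: once every $z_1$-multiple of the relevant degree lies in $K$, the generator $\left(\det\hat A_3\right)z_1-\eps_{13}\left(\det\hat A_1\right)z_3$ yields $\left(\det\hat A_1\right)z_3\equiv0$, and with the claimed $f$'s one has $\det\hat A_1$ equal to a nonzero scalar times $x_0^{d_0-1}x_1^{d_2-1}x_2^{d_3-1}$ (resp.\ $x_0^{d_3-1}x_1^{d_2-1}$ if $d_0=0$), which modulo $f_2,f_3$ gives $x_0^{d_0+d_2+d_3-1}z_3\equiv0$. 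The correct $z_3$-side complete intersection is therefore $x_0^{d_0+d_2+d_3-1},f_2,f_3$, with $\mu$ its Lefschetz element and $g_{23}$ the Lefschetz power; the numerics reduce to $r_1\ge r_0-1$. In particular the two sides cannot be handled symmetrically and glued at the end: your gluing observation (every element of $U(t-d+2r_2,1)$ is $h_1z_1+h_3z_3$) is correct, but the decoupling of the mixed generators and the forced order ``first $z_1$, then $z_3$'' are the actual content of the proof, and they are absent from your proposal.
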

\begin{proof}[Proof of the claim]
The claim is obvious for $d_2=0$, so we may assume $d_2>0$ in the following.
As in the case of the ideal $I$, we consider instead the larger homogeneous ideal
\[ L = \bigoplus_{m,n\in\ZZ}\{r\in U(m,n) \mid rU\cap U(t-d+2r_2,1)\subset K\} \]
and we want to show that $U(t-d+2r_2,1)\subset L$ (or equivalently, $L=U$). This will be done by proving first $z_1\in L$ and then $z_3\in L$.
Since $K\subset L$, we have
\[ 0 \equiv g_{11}z_1 + g_{33}z_3 = g_{11}z_1 + \mu^{r_3-r_2}g_{23}z_3 \equiv g_{11}z_1 \pmod L \;. \]
By Proposition~\ref{prop:lefschetz}, the complete intersection $f_1,f_2,g_{11}$ in $\CC[x_0,x_1,x_2]$ possesses the strong Lefschetz property.
We may thus assume that $\nu$ is a strong Lefschetz element for $f_1,f_2,g_{11}$. Lemma~\ref{lemma:gen4} then implies
\[ z_1U(m,0) \subset f_1z_1U + f_2z_1U + g_{11}z_1U + g_{12}z_1U \subset L \]
for all $m\ge\frac12(d_1+d_2+t-d+2r_1+t-d+r_1+r_2-3)$.
In order to show $z_1\in L$, we thus need to check that
\[ 2(t-d+2r_2+d_1) \ge d_1+d_2+t-d+2r_1+t-d+r_1+r_2-3 \;. \]
This is equivalent to
\[ 4r_2+2d_1 \ge d_1+d_2+3r_1+r_2-3 \;, \]
which simplifies to
$r_2 \ge r_1-3$. The last inequality is obviously true.

Next we show $z_3\in L$.
If $d_0>0$, we have
\[ \det\hat A_1 = \det\begin{pmatrix}
d_0x_0^{d_0-1} & d_2x_0^{d_2-1} & d_3x_0^{d_3-1} \\[.5em]
0 & d_2x_1^{d_2-1} & 0 \\[.5em]
0 & 0 & d_3x_2^{d_3-1}
\end{pmatrix} = d_0d_2d_3x_0^{d_0-1}x_1^{d_2-1}x_2^{d_3-1} \;. \]
Together with $K\subset L$ and $z_1\in L$, this implies
\[ 0\equiv (d_0d_2d_3)^{-1}x_1x_2\left(\det\hat A_1\right)z_3 = x_0^{d_0-1}x_1^{d_2}x_2^{d_3}z_3 \equiv x_0^{d_0+d_2+d_3-1}z_3 \pmod L \;. \]
Similarly, for $d_0=0$ we have
\[ \det\hat A_1 = \det\begin{pmatrix}
d_2x_0^{d_2-1} & d_3x_0^{d_3-1} \\[.5em]
d_2x_1^{d_2-1} & 0
\end{pmatrix} = -d_2d_3x_0^{d_3-1}x_1^{d_2-1} \]
and thus
\[ 0\equiv (d_2d_3)^{-1}x_1\left(\det\hat A_1\right)z_3 = -x_0^{d_3-1}x_1^{d_2}z_3 \equiv x_0^{d_0+d_2+d_3-1}z_3 \pmod L \]
as well.
By Proposition~\ref{prop:lefschetz}, the complete intersection $x_0^{d_0+d_2+d_3-1},f_2,f_3$ has the strong Lefschetz property.
Hence, we may assume that $\mu$ is a strong Lefschetz element for $x_0^{d_0+d_2+d_3-1},f_2,f_3$. Lemma~\ref{lemma:gen4} implies
\[ z_3U(m,0) \subset x_0^{d_0+d_2+d_3-1}z_3U + f_2z_3U + f_3z_3U + g_{23}z_3U \subset L \]
for all $m\ge\frac12(d_0+d_2+d_3-1+d_2+d_3+t-d+r_2+r_3-3)$.
It thus remains to check
\[ 2(t-d+2r_2+d_3) \ge d_0+d_2+d_3-1+d_2+d_3+t-d+r_2+r_3-3 \]
or
\[ 2r_0+2r_1+6r_2+6r_3+8d-6 \ge 3r_0+r_1+6r_2+6r_3+8d-7 \;. \]
This reduces to $r_1\ge r_0-1$, which is clearly true.
This finishes the proof of \eqref{eq:claimU}.
\end{proof}

Now let us take a monomial $hy_1^2y_2^2\in S(t,4)$ where $h\in S(t+2r_1+2r_2,0)$. We have $hz_1\in U(t-d+2r_2,1)$ and thus
\begin{align*}
hz_1 &= h_1f_1z_1 + h_2f_2 + h_3f_3z_3 + h_4g_{12}z_1 + h_5g_{23}z_3 \\
&\phantom{{}={}} {} + h_6(g_{11}z_1+g_{33}z_3) + h_7\left(\left(\det\hat A_3\right)z_1-\eps_{13}\left(\det\hat A_1\right)z_3\right)
\end{align*}
for certain homogeneous polynomials $h_1,\ldots,h_7\in U$. Substituting $z_j$ by $y_j^2$ for $j\in\{1,3\}$ and multiplying with $y_2^2$, we get
\begin{align*}
hy_1^2y_2^2 &= h_1f_1y_1^2y_2^2 + \tilde h_2f_2y_2^2 + h_3f_3y_2^2y_3^2 + h_4g_{12}y_1^2y_2^2 + h_5g_{23}y_2^2y_3^2 \\
&\phantom{{}={}} {} + h_6\left(g_{11}y_1^2y_2^2 + g_{33}y_2^2y_3^2\right) + h_7\left(\left(\det\hat A_3\right)y_1^2-\eps_{13}\left(\det\hat A_1\right)y_3^2\right)y_2^2 \\
&\equiv h_1y_1y_2^2\cdot0 + \tilde h_2y_2\cdot0 + h_3y_2^2y_3\cdot0 + h_4\cdot0 + h_5\cdot0 + h_6\cdot0 + h_7y_2^2\cdot0 \\
&\equiv0 \pmod J
\end{align*}
where we used the congruences \eqref{eq:fy}, \eqref{eq:gs2t2}, \eqref{eq:det}, \eqref{eq:det0}, and \eqref{eq:gs2t2+gt2u2},
and where $\tilde h_2$ denotes the result of the substitution inside $h_2$. This concludes the proof of $y_1^2y_2^2\in J$.

At this point, we are ready to handle the general case of $\{\sigma(0),\sigma(1)\}$. For this, we show the following claim:
\begin{claim}
If $\tau$ is a permutation of $\{0,1,2,3\}$ such that $\tau(3)<\tau(2)$,
then any multiple of $y_{\tau(0)}^2y_{\tau(1)}^2$ in $S(t,4)$ can be replaced modulo~$J$ by a multiple of $y_{\tau(0)}^2y_{\tau(2)}^2$ in $S(t,4)$.
\end{claim}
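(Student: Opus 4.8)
The plan is to exploit the symmetry already present in the congruences \eqref{eq:fy}, \eqref{eq:dfy2}, and \eqref{eq:g} in order to transport the already-established case $\{\sigma(0),\sigma(1)\}=\{1,2\}$ to an arbitrary pair. The congruence \eqref{eq:det} (and its degenerate counterpart \eqref{eq:det0}) is the crucial tool: it directly relates $y_i^2$ and $y_j^2$ up to multiplication by the minors $\det\hat A_j$ and $\det\hat A_i$. So given a permutation $\tau$ with $\tau(3)<\tau(2)$, I would start from a monomial $h\,y_{\tau(0)}^2y_{\tau(1)}^2\in S(t,4)$ and multiply the relation $(\det\hat A_{\tau(2)})y_{\tau(3)}^2\equiv\pm(\det\hat A_{\tau(3)})y_{\tau(2)}^2\pmod J$ by $h\,y_{\tau(0)}^2y_{\tau(1)}^2$, reducing the left side using \eqref{eq:fy} and the previous steps and isolating a multiple of $y_{\tau(0)}^2y_{\tau(2)}^2$ on the right. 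The point of the hypothesis $\tau(3)<\tau(2)$ is to arrange that the minor $\det\hat A_{\tau(3)}$ appearing on the $y_{\tau(2)}^2$ side is a \emph{monomial} (or at least a controllable polynomial) for the special choice of the $f_j$, just as $\det\hat A_1$ was computed explicitly in the previous step.

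The key steps, in order, are as follows. First I would fix the special polynomials $f_j$ from the claim in Step~\ref{step:s2t2} (or a symmetric variant adapted to $\tau$) so that the relevant minor $\det\hat A_{\tau(3)}$ becomes a monomial in $x_0,x_1,x_2$; this is the analogue of the computation $\det\hat A_1 = d_0d_2d_3\,x_0^{d_0-1}x_1^{d_2-1}x_2^{d_3-1}$. Second, I would observe that, after multiplying \eqref{eq:det} by an appropriate monomial to clear denominators, a multiple of $y_{\tau(0)}^2y_{\tau(1)}^2$ is congruent modulo~$J$ to a multiple of $y_{\tau(0)}^2y_{\tau(2)}^2$ — here the exponent on the new variable $y_{\tau(2)}$ picks up extra factors of $x_0$ etc., but this is harmless since we are allowed to prescribe only that \emph{some} multiple appears. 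Third, I would handle the degenerate case $d_0=0$ separately using \eqref{eq:det0}, exactly as the previous step split into the cases $d_0>0$ and $d_0=0$; when $\tau(0)=0$ or $d_0=0$ one of the variables is already $\equiv 0\pmod J$ by \eqref{eq:fy}, so the claim is vacuous or immediate. Finally, iterating this replacement (first turning $\{\sigma(0),\sigma(1)\}$ into a pair containing a fixed index, then matching it with $\{1,2\}$) reduces the general case to the case already proven.

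The main obstacle I expect is bookkeeping the sign $\eps_{ij}$ and, more substantively, ensuring that the degree of the monomial $h$ we start with is large enough that after multiplication by $\det\hat A_{\tau(2)}$ and division by the monomial part of $\det\hat A_{\tau(3)}$ the resulting class still lies in $S(t,4)$ with a nonnegative exponent on every variable — i.e.\ that no $x_k$-degree goes negative. This is the same kind of inequality-chasing that appeared at the end of each earlier step (e.g.\ reducing to $r_2\ge r_1-3$ or $r_1\ge r_0-1$), and the hypothesis $r_0\le r_1\le r_2\le r_3$ together with $\tau(3)<\tau(2)$ should be exactly what makes these inequalities come out right; the role of the ordering on the $r_j$ is to guarantee that the ``larger'' index is the one whose square we are producing, so that the monomial we must divide by has small enough exponents. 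Once the claim is established, combining it with the $\{1,2\}$ case via a short combinatorial argument over permutations finishes Step~\ref{step:s2t2}.
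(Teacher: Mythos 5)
Your plan misses the actual content of the claim. The statement is that \emph{any} multiple $h\,y_{\tau(0)}^2y_{\tau(1)}^2\in S(t,4)$, with $h\in S(t+2r_{\tau(0)}+2r_{\tau(1)},0)$ arbitrary, is congruent modulo~$J$ to a multiple of $y_{\tau(0)}^2y_{\tau(2)}^2$; this universality is indispensable, because in the subsequent iteration the coefficient produced by one transition (something like $\pm h_4\det\hat A_{\tau(1)}$) is not under your control, so a statement of the form ``\emph{some} multiple can be replaced'' is useless. The relation \eqref{eq:det} only converts a term whose coefficient is a multiple of $\det\hat A_{\tau(2)}$ (and note it is the relation $(\det\hat A_{\tau(2)})y_{\tau(1)}^2\equiv\pm(\det\hat A_{\tau(1)})y_{\tau(2)}^2$ between $y_{\tau(1)}^2$ and $y_{\tau(2)}^2$ that is relevant; the one you write down, between $y_{\tau(3)}^2$ and $y_{\tau(2)}^2$, multiplied by $h\,y_{\tau(0)}^2y_{\tau(1)}^2$, leaves $S(t,4)$ altogether and says nothing about the monomial at hand). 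So the heart of the matter, which your proposal never addresses, is the generation statement
\[ S(t+2r_{\tau(0)}+2r_{\tau(1)},0) \subset f_{\tau(0)}S + f_{\tau(1)}S + g_{\tau(0)\tau(1)}S + \left(\det\hat A_{\tau(2)}\right)S \;, \]
after which \eqref{eq:fy} kills the $f_{\tau(0)},f_{\tau(1)}$ parts, \eqref{eq:gs2t2} kills the $g_{\tau(0)\tau(1)}$ part, and \eqref{eq:det}/\eqref{eq:det0} converts the remaining part into a multiple of $y_{\tau(0)}^2y_{\tau(2)}^2$. Dismissing the coefficient bookkeeping with ``we are allowed to prescribe only that some multiple appears'' is exactly where your argument fails.

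Moreover, proving this generation statement is not a routine symmetry transport of the $\{1,2\}$ case: the three elements $f_{\tau(0)},f_{\tau(1)},\det\hat A_{\tau(2)}$ do not generate in the required degree by Lemma~\ref{lemma:gen3} alone, so the term $g_{\tau(0)\tau(1)}$ and the strong Lefschetz machinery are essential. The paper's route is: by Lemma~\ref{lemma:open} specialize to $f_{\tau(0)}=x_0^a+x_1^a$, $f_{\tau(1)}=x_0^b+x_2^b$, $f_{\tau(3)}=x_0^c$, so that $\det\hat A_{\tau(2)}$ (the minor deleting column $\tau(2)$, hence built from $f_{\tau(0)},f_{\tau(1)},f_{\tau(3)}$ --- not $\det\hat A_{\tau(3)}$ as in your first step) is a monomial, whence $x_0^{a+b+c-1}$ lies in the ideal; then the complete intersection $x_0^{a+b+c-1},f_{\tau(0)},f_{\tau(1)}$ has the strong Lefschetz property by Proposition~\ref{prop:lefschetz}, one takes $g_{\tau(0)\tau(1)}$ to be a power of a Lefschetz element, and Lemma~\ref{lemma:gen4} gives generation down to a degree bound that reduces precisely to $r_{\tau(2)}\ge r_{\tau(3)}-1$, which is where the hypothesis $\tau(3)<\tau(2)$ (together with $r_0\le r_1\le r_2\le r_3$) enters. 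Your worries about ``dividing by the monomial part of $\det\hat A_{\tau(3)}$'' and negative $x_k$-exponents are a red herring --- no division occurs in the correct argument --- while the genuinely needed ingredients (the ideal-membership statement, the role of $g_{\tau(0)\tau(1)}$, Proposition~\ref{prop:lefschetz}, Lemma~\ref{lemma:gen4}, and the resulting inequality) are absent, so the proposal as it stands does not prove the claim.
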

\begin{proof}[Proof of the claim]
In view of \eqref{eq:fy}, \eqref{eq:gs2t2}, \eqref{eq:det}, and \eqref{eq:det0}, it suffices to show that
\[ S(t+2r_{\tau(0)}+2r_{\tau(1)},0) \subset f_{\tau(0)}S + f_{\tau(1)}S + g_{\tau(0)\tau(1)}S + \left(\det\hat A_{\tau(2)}\right)S \;. \]
This will follow from Lemma~\ref{lemma:open} once we provide a special choice for the general polynomials
$f_{\tau(0)},f_{\tau(1)},f_{\tau(3)},g_{\tau(0)\tau(1)}$ satisfying this property.
Let $a=d_{\tau(0)}$, $b=d_{\tau(1)}$, and $c=d_{\tau(3)}$.
We may assume $a,b>0$ because otherwise we would already have $y_{\tau(0)}^2y_{\tau(1)}^2\equiv0\pmod J$ by \eqref{eq:fy}.
We take
\[ f_{\tau(0)} = x_0^a+x_1^a \;,\quad f_{\tau(1)} = x_0^b+x_2^b \;,\quad f_{\tau(3)} = x_0^c \;. \]
If also $c>0$, we have
\[ \det\hat A_{\tau(2)}=\pm\det\begin{pmatrix} ax_0^{a-1} & bx_0^{b-1} & cx_0^{c-1} \\ ax_1^{a-1} & 0 & 0 \\ 0 & bx_2^{b-1} & 0 \end{pmatrix}
= \pm abcx_0^{c-1}x_1^{a-1}x_2^{b-1} \;. \]
Therefore, we get
\[ x_0^{a+b+c-1} \in f_{\tau(0)}S + f_{\tau(1)}S + \left(\det\hat A_{\tau(2)}\right)S \;. \]
If $c=0$, it follows that $d_0=0$. Since $a,b>0$ and $\tau(3)<\tau(2)$, only $\tau(3)=0$ is possible. Then we have
\[ \det\hat A_{\tau(2)}=\pm\det\begin{pmatrix} ax_0^{a-1} & bx_0^{b-1} \\ ax_1^{a-1} & 0 \end{pmatrix} = \mp abx_0^{b-1}x_1^{a-1} \]
und thus again
\[ x_0^{a+b+c-1}=x_0^{a+b-1} \in f_{\tau(0)}S + f_{\tau(1)}S + \left(\det\hat A_{\tau(2)}\right)S \;. \]
In either case, the complete intersection $x_0^{a+b+c-1},f_{\tau(0)},f_{\tau(1)}$ has the strong Lefschetz property by Proposition~\ref{prop:lefschetz},
so we may pick for $g_{\tau(0)\tau(1)}$ an adequate power of a strong Lefschetz element and obtain via Lemma~\ref{lemma:gen4}
\[ S(m,0) \subset f_{\tau(0)}S + f_{\tau(1)}S + g_{\tau(0)\tau(1)}S + \left(\det\hat A_{\tau(2)}\right)S \]
for all $m\ge\frac12(a+b+c-1+a+b+t-d+r_{\tau(0)}+r_{\tau(1)}-3)$. Therefore, it remains to prove that
\[ 2(t+2r_{\tau(0)}+2r_{\tau(1)}) \ge a+b+c-1+a+b+t-d+r_{\tau(0)}+r_{\tau(1)}-3 \;. \]
This simplifies to
\[ 6r_{\tau(0)}+6r_{\tau(1)}+2r_{\tau(2)}+2r_{\tau(3)}+8d-6 \ge 6r_{\tau(0)}+6r_{\tau(1)}+r_{\tau(2)}+3r_{\tau(3)}+8d-7 \]
or just $r_{\tau(2)} \ge r_{\tau(3)}-1$, which holds because $\tau(3)<\tau(2)$.
\end{proof}
With this result at hand, we proceed as follows: We start with a monomial of degree~$(t,4)$ divisible by $y_{\sigma(0)}^2y_{\sigma(1)}^2$
and repeatedly apply transitions of the form \[ y_{\tau(0)}^2y_{\tau(1)}^2\leadsto y_{\tau(0)}^2y_{\tau(2)}^2 \] with $\tau(3)<\tau(2)$
for a suitable permutation $\tau$ until we arrive at a polynomial divisible by $y_1^2y_2^2$, for which we have already shown that it
vanishes modulo~$J$. The fact that such a sequence of transitions always exists can be most easily seen from the following diagram:
\[ \xymatrix@=3em@M+=6pt{
\{0,1\} \ar@/^/[r]^{2<3}\ar@/_/[d]^{2<3} & \{1,3\} \ar@/_/[d]^{0<2}\ar@/^/[rd]^{0<2} \\
\{0,3\} \ar@/^/[r]^{1<2} & \{2,3\} \ar@/^/[r]^{0<1} & \{1,2\} \\
\{0,2\} \ar@/^/[u]_{1<3}\ar@/^/[ru]_{1<3}
} \]
The arrows are labeled with the inequalities $\tau(3)<\tau(2)$ which hold for the employed permutations~$\tau$.
For every possible subset $\{\sigma(0),\sigma(1)\}\subset\{0,1,2,3\}$, there exists at least one directed path ending in $\{1,2\}$.
This completes the proof of step~\ref{step:s2t2}.
\end{proof}

\subsection{Fourth step}\label{step:s4}

We have $y_j^4\in J$ for all $j\in\{0,1,2,3\}$.
\begin{proof}
Let us take a monomial $hy_j^4$ where $h\in S(t+4r_j,0)$. If $d_j=0$, we are done by \eqref{eq:fy}.
Otherwise, multiplying \eqref{eq:dfy2} with $y_j^2$ and using step~\ref{step:s2t2} produces
\[ \frac{\partial f_j}{\partial x_k}y_j^4\equiv0\pmod J\;,\quad k\in\{0,1,2\}\;. \]
First suppose $j<3$. By Lemmas \ref{lemma:open} and \ref{lemma:gen3}, we have
\[ S(3d_j-5,0) \subset \frac{\partial f_j}{\partial x_0}S+\frac{\partial f_j}{\partial x_1}S+\frac{\partial f_j}{\partial x_2}S \]
since the partial derivatives of $f_j=x_0^{d_j}+x_1^{d_j}+x_2^{d_j}$ form a complete intersection.
Therefore, it remains to show that $t+4r_j \ge 3d_j-5$.
This is equivalent to
\[ r_0+r_1+r_2+r_3+4r_j+4d-3 \ge 6r_j+3d-5 \;, \]
which in turn is equivalent to
\[ r_0+r_1+r_2+r_3+d+2 \ge 2r_j \;. \]
The last inequality is true because $j\le2$ implies $r_2+r_3\ge r_j+r_j$.

Now let $j=3$. If we multiply \eqref{eq:g} with $y_3^2$ and use all previous steps, we obtain
\[ g_{33}y_3^4 \equiv 0 \pmod J \;. \]
We claim that
\[ S(t+4r_3,0) \subset \frac{\partial f_3}{\partial x_0}S+\frac{\partial f_3}{\partial x_1}S+\frac{\partial f_3}{\partial x_2}S+g_{33}S \;. \]
By Lemma~\ref{lemma:open}, it is enough to give one working example for $f_3$ and $g_{33}$.
If we take again $f_3=x_0^{d_3}+x_1^{d_3}+x_2^{d_3}$, the complete intersection given by the partial derivatives of $f_3$ has the
strong Lefschetz property by Proposition~\ref{prop:lefschetz}, so we may choose for $g_{33}$ a power of a strong Lefschetz element and obtain
via Lemma~\ref{lemma:gen4} that
\[ S(m,0) \subset \frac{\partial f_3}{\partial x_0}S+\frac{\partial f_3}{\partial x_1}S+\frac{\partial f_3}{\partial x_2}S+g_{33}S \]
for all $m\ge\frac12(3d_3-3+t-d+2r_3-3)$. Therefore, we are finished if
\[ 2(t+4r_3) \ge 3d_3-3+t-d+2r_3-3 \;. \]
This simplifies to
\[ 2r_0+2r_1+2r_2+10r_3+8d-6 \ge r_0+r_1+r_2+9r_3+6d-9 \;, \]
or equivalently,
\[ r_0+r_1+r_2+r_3+2d+3 \ge 0 \;. \]
The last statement is clearly true.
\end{proof}
Since every monomial in $S(t,4)$ is divisible by an element handled in one of the four steps above, we obtain $S(t,4)\subset J$ as desired.
This finally ends the proof of Proposition~\ref{prop:main}.
\begin{remark}
It was crucial in the choice of $g$ to leave out the terms $g_{00}$ and $g_{22}$,
i.\,e.\ the ones belonging to the smallest and second-largest values among the degrees $d_0,d_1,d_2,d_3$.
With any other two indices, the above proof would not work. Furthermore, if we would also set $g_{33}=0$,
the proof of step~\ref{step:s2t2} would be much simpler, but then step~\ref{step:s4} would work out only if $d_3\le d_0+d_1+d_2+4$.
And if we would instead set $g_{11}=0$, step~\ref{step:s4} could be left untouched, but step~\ref{step:s2t2}, though it would be simpler,
would turn out right only if $d_3\le d_2+6$. It is also worth to mention that the properties of $J$ we are proving in each of the four steps are in general \emph{not} open on the polynomials $f_j$ and $g_{ij}$,
thus an argument where one specializes to $g_{33}=0$ in one step but not in another one does not succeed.
\end{remark}

\bibliographystyle{myamsalpha}
\bibliography{biblio}
\end{document}